\definecolor{mygreen}{RGB}{13, 110, 53}
\definecolor{Green}{RGB}{0, 128, 0}
\definecolor{navyblue}{RGB}{0, 0, 128}
\definecolor{MyMulberry}{RGB}{197, 75, 140}
\title{Simplicity and boundary behavior of spike sequences for a superlinear problem in plasma physics}
\author{Paolo Cosentino\thanks{Department of Mathematics, University of Rome {\it ``Tor Vergata''}, Via della ricerca scientifica n.1, 00133 Roma, Italy. e-mail: cosentino@mat.uniroma2.it} \, and \, Francesco Malizia\thanks{Scuola Normale Superiore, Piazza dei Cavalieri 7, 56126 Pisa. e-mail: francesco.malizia@sns.it}}
\date{}
\begin{document}

\maketitle

{\footnotesize
		\begin{abstract}
             \noindent We prove that spike sequences related to a nonlinear problem of Grad-Shafranov type are always \emph{simple} and always converge toward \emph{interior} points of the domain. This sharpens the blow-up analysis carried out by Bartolucci-Jevnikar-Wu in \cite{bartolucci-jevnikar-wu-2025-CalcVar} and provides a converse to the existence result for spike sequences obtained by Wei in \cite{wei-2001-ProcEdinb-multiple-condensation}.

            \vspace{3ex}
			\noindent 
			{\bf Keywords}: free boundary problems, subcritical problems, spikes vanishing-condensation.

            \noindent{{\bf MSC 2020: }35J61, 35B44, 82D10. }

	\end{abstract}}


\section{Introduction}\label{sec1}

Let us consider an open, bounded domain with smooth boundary $\Omega\subset\mathbb{R}^N$, for $N\geq3$. Given $p\in(1,p_N)$, with $p_N=\tfrac{N}{N-2}$, and $\lm>0$, we are concerned with the following problem:
\begin{equation}\label{plasma}
 \begin{cases}-\D \psi=[\alpha+\lm\psi]^p_+\qquad &\text{in}\,\,\O,
 \\[0.5ex]
 \psi=0\qquad &\text{on}\,\,\p\O,
 \\
 \int_\O[\alpha+\lm\psi]^p_+=1,
 \end{cases}
\end{equation}
where the unknown is the pair $(\alpha,\psi)\in\R\times C^{2,\beta}(\overline\O)$, for a certain $\beta\in(0,1)$.  
\\
The system \eqref{plasma} represents a simpler model of a problem involving the Grad-Shafranov operator and related to the physics of Tokamak's plasma (see \cite{stacey-plasmaphysics}). A lot of work has been done to study existence, multiplicity of solutions and other problems concerning the free boundary of the so-called \emph{plasma region} (that is, the region in which $\alpha+\lambda\psi>0$); for a detailed list of references, we direct the interested reader to \cite{bartolucci-jevnikar-wu-2025-CalcVar}. In particular, it is well known (\cite{berestycki-brezis-1980-nonlinearAnalys}) that, for any $\lambda>0$, there exists at least one solution of \eqref{plasma}. Moreover, in the case $p=1$, \eqref{plasma} is equivalent to a problem discussed by Temam in \cite{temam-1975-ARMA-plasmaequilibrium} and \cite{temam-1977-CommPDE-free-boundary-plasma}.

More recently, in \cite{bartolucci-jevnikar-2022-JD} and \cite{bartolucci-jevnikar-wu-2025-CalcVar}, the authors derived uniqueness and concentration-compactness-quantization results for solutions of \eqref{plasma}.
In particular, letting
\begin{equation*}
v:=\frac{\lambda}{|\alpha|}\psi, \qquad \mu:=\lambda\abs{\alpha}^{p-1},
\end{equation*}
the first equation in \eqref{plasma} becomes
\begin{equation}\label{eq:plasma-model-eq}
-\D v=\mu[v-1]^p_+ \hspace{0,5cm}\text{in}\,\, \O.
\end{equation}
Using this form, the authors in \cite{bartolucci-jevnikar-wu-2025-CalcVar} studied the asymptotic behavior of solutions for the following system: 
\begin{equation}\label{plasma2}
 \begin{cases} -\D v_n=\mu_n[v_n-1]^p_+\qquad &\text{in}\,\,\O,
 \\
 \,\,\,\,\mu_n\to\infty & \text{as $n\to+\infty$}, \\
 \,\,\,\,v_n\geq0,
 \end{cases}
\end{equation}
and deduce, under suitable assumptions, a vanishing-spike condensation alternative for the solutions $v_n$. In, particular, as $n\to+\infty$, the sequence $v_n$ might develop some \emph{spikes}, a behavior which was already considered in \cite{flucher-wei-1998-MathZ} and \cite{wei-2001-ProcEdinb-multiple-condensation}. \\
To give a rough idea, we refer to $v_n$ as a \emph{spike sequence} if there is at least a sequence $(x_{n})_n\subset\O$ which converges to some point $z\in\overline{\O}$, such that $v_n(x_n)\geq1+\delta$ for $\delta>0$, and, after a suitable rescaling/blow-up around $x_n$\footnote{The scaling is of type $w_n(x):= v_n(x_n+ \epsilon_nx)$, where $\epsilon_n^2:=\mu_n^{-1}$.}, one obtains a new sequence $w_n$ which converges to the unique (radially symmetric) solution $w_0$ of
\begin{align*}
\begin{cases}
    -\D w= [w-1]_+^p & \text{in}\,\,\R^N,\\
    w>0,\,\,\,\, w(x)\to0  & \text{as}\,\, |x|\to+\infty,\\
    w(0)=\underset{{x\in\R^N}}\max w(x)>1, \\
    \int_{\R^N}[w-1]_+^p<+\infty.
\end{cases}
\end{align*} 
In other words, as $n\to+\infty$, $v_n$ develops some spike-shaped shrinking profiles which converge to some point of $\overline{\Omega}$ and which, if suitably scaled, all converge to the model solution $w_0$. Moreover, the $v_n$'s converge uniformly to $0$ away from the spike concentration points, and each spike profile carries the same \acc mass'', which means that, for $R$ sufficiently large,
\[
\mu_n^{\frac{N}{2}}\int_{B_{R\epsilon_n}(x_{i,n})}[v_n-1]_+^p\to M_{p,0}:=\int_{\R^N}[w-1]_+^p=\int_{B_{R_0}(0)}[w-1]_+^p,
\]
where $\epsilon_n^2:=\mu_n^{-1}$. For a precise definition of spike set and spike sequences, we refer to Section \ref{sec:Preliminaries} and, in particular, Definition 1.7 in \cite{bartolucci-jevnikar-wu-2025-CalcVar}.

\medskip

If we also consider the Dirichlet boundary condition and the integral constraint in \eqref{plasma}, we are naturally led to the study of
\begin{equation}\label{plasma3}
 \begin{cases} -\D v_n=\mu_n[v_n-1]^p_+\qquad &\text{in}\,\,\O,
 \\[0.5ex]
 v_n=0\qquad &\text{on}\,\,\p\O,\\[0.5ex]
 \mu_n=\lambda_n|\alpha_n|^{p-1}\to\infty & \text{as $n\to+\infty$}, \\[0.5ex]
 \int_\O|\alpha_n|^p[v_n-1]^p_+=1. 
 \end{cases}
\end{equation}
In \cite[Theorem 1.13]{bartolucci-jevnikar-wu-2025-CalcVar}\footnote{Which is restated as Theorem \ref{BJWtheorem} below.}, it is shown that, assuming the following bound on the total \acc mass'' of the sequence $v_n$,
\begin{equation}
    \label{HypothesisAintroduction}
    \mu_n^{\tfrac{N}{2}}\int_{\O}[v_n-1]_+^p\,dx\leq C_0,\,\,\,\,\,\text{for some $C_0>0,$}
\end{equation}
and assuming also that either
\begin{equation}
\label{convexityintroduction}
    \O \,\,\text{is convex,}
\end{equation}
or else
\begin{equation}
    \label{HypothesisBintroduction}
    \mu_n^{\tfrac{N}{2}}\int_{\O}[v_n-1]_+^{p+1}\,dx\leq C_1,\,\,\,\,\,\text{for some $C_1>0,$}
\end{equation}
then the solutions $v_n$ of \eqref{plasma3} develop a finite number of spikes which, along a subsequence, condense into a finite spike set in $\overline{\Omega}$. Moreover, far enough from the spike set, the rescaled sequence $\mu_n^{\frac{N-2}{2}}v_n$ converges to a weighted sum of Green's functions. In the analysis of \eqref{plasma3}, the authors questioned the possibility to have in general only \emph{simple} spike points. In short, a spike point $\bar{x}\in \overline{\Omega}$ is simple when there exists only one spike profile converging to $\bar{x}$, as $n\to+\infty$, see Definition \ref{def:simplespike} (a priori one could have multiple spikes clustering at the same point in the limit). 

\medskip
The first result of this paper shows that, under the aforementioned assumptions, \emph{interior} spike points are always simple:
\begin{theorem}\label{Simplicitytheorem}
Let $v_n$ be a sequence of solutions for \eqref{plasma3} which satisfies \eqref{HypothesisAintroduction}.
Moreover, let us assume that either \eqref{convexityintroduction} or \eqref{HypothesisBintroduction} hold.
Then $v_n$ is a spike sequence and the interior spike points are simple, in the sense of Definition \ref{def:simplespike}.
\end{theorem}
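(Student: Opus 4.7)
We argue by contradiction. By Theorem~\ref{BJWtheorem}, under the stated hypotheses $(v_n)$ is already known to be a spike sequence, with spike centers accumulating at a finite spike set $\mathcal{S}\subset\overline{\Omega}$, and with $\mu_n^{(N-2)/2}v_n$ converging away from $\mathcal{S}$ to a weighted sum of Green's functions of $-\Delta$ on $\Omega$. Assume some $\bar x\in\mathcal{S}\cap\Omega$ is not simple in the sense of Definition~\ref{def:simplespike}: then there are two distinct sequences of spike centers $(x_{1,n}),(x_{2,n})\subset\Omega$ with $x_{i,n}\to\bar x$. Set $\epsilon_n:=\mu_n^{-1/2}$ and $d_n:=|x_{1,n}-x_{2,n}|$, so $d_n\to0$. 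The proof hinges on the asymptotic behavior of the ratio $d_n/\epsilon_n$.

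\smallskip
\noindent\emph{Short-distance regime.} If $\limsup d_n/\epsilon_n<+\infty$, then along a subsequence $y_n:=(x_{2,n}-x_{1,n})/\epsilon_n\to y_\infty\neq 0$. The blow-up $w_n(y):=v_n(x_{1,n}+\epsilon_n y)$ satisfies $-\Delta w_n=[w_n-1]_+^p$ and converges in $C^2_{\mathrm{loc}}(\mathbb{R}^N)$ to the radial profile $w_0$. Since $x_{2,n}$ is itself a spike center, $y_n$ is a local maximum of $w_n$ with $w_n(y_n)\geq 1+\delta$; passing to the limit, $y_\infty\neq 0$ would be a critical point of $w_0$ with $w_0(y_\infty)\geq 1+\delta$, contradicting the strict radial monotonicity of $w_0$ on $\{w_0>1\}$. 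Hence $d_n/\epsilon_n\to +\infty$.

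\smallskip
\noindent\emph{Cluster-scale regime.} Let $k\geq2$ be the number of spike sequences converging to $\bar x$. Replacing $\bar x$ by the centroid $\tfrac1k\sum_i x_{i,n}$ (still converging to $\bar x$) we may assume $\sum_i(x_{i,n}-\bar x)=0$. Along a subsequence, $\tilde x_{i,n}:=(x_{i,n}-\bar x)/d_n\to y_i\in\mathbb{R}^N$ with $\sum_i y_i=0$. Rescale at the cluster scale
\[
\tilde v_n(y):=d_n^{N-2}\,\mu_n^{(N-2)/2}\,v_n(\bar x+d_n y).
\]
Combining the Green's function asymptotics of Theorem~\ref{BJWtheorem}, the quantization of mass $M_{p,0}$ at each spike and the separation $d_n\gg\epsilon_n$, one checks that $\tilde v_n$ converges in $C^2_{\mathrm{loc}}\!\left(\mathbb{R}^N\setminus\{y_1,\dots,y_k\}\right)$ to the multi-pole harmonic function
\[
\tilde v_\infty(y)=\sum_{i=1}^k \frac{M_{p,0}}{(N-2)\,\omega_{N-1}}\,|y-y_i|^{2-N}.
\]
To rule this out, apply the Pohozaev identity to $v_n$ on $B_{R d_n}(\bar x)\Subset\Omega$ (allowed for $n$ large precisely because $\bar x$ is interior) with $R$ large but fixed. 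The sum of the individual spike contributions to the bulk integral is annihilated by the Pohozaev identity satisfied by $w_0$, and the leading-order boundary term reduces to a Pohozaev integral for the harmonic $\tilde v_\infty$ which vanishes identically. The contradiction must therefore be extracted from the next-order expansion of the identity: matching the outer Green's function representation with the inner multi-pole profile yields an algebraic balance condition on the configuration $\{y_1,\dots,y_k\}$ which, together with $\sum_i y_i=0$, forces the $y_i$ to coincide --- impossible for $k\geq 2$ distinct rescaled spike centers.

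\smallskip
\noindent\textbf{Main obstacle.} The technical heart is the sub-leading Pohozaev analysis at the cluster scale. Both the dilation and the translation forms of Pohozaev vanish identically on the limit $\tilde v_\infty$, so the contradiction has to be read off the first nontrivial correction in $d_n$ (and $\mu_n$). This requires: (i) sharp pointwise expansions for $v_n$ in the transition annulus $|x-\bar x|\sim d_n$, decomposing $v_n$ into a superposition of individual spike profiles plus a smooth remainder; (ii) quantitative $C^2$-convergence of $\tilde v_n$ to $\tilde v_\infty$ away from $\{y_1,\dots,y_k\}$; and (iii) control of interactions with other clusters in $\mathcal{S}$ and with $\partial\Omega$. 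Point (iii) is where the interior hypothesis $\bar x\in\Omega$ enters decisively: it guarantees $B_{Rd_n}(\bar x)\Subset\Omega$ for $n$ large, so that no boundary term along $\partial\Omega$ pollutes the balance at the cluster scale.
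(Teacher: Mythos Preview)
Your overall architecture --- contradict, rescale at the cluster scale $d_n=\delta_n$, show $\delta_n/\epsilon_n\to\infty$, obtain a multi-pole harmonic limit, and finish with a Pohozaev identity --- matches the paper's. However, the way you propose to extract the contradiction from Pohozaev is the wrong one, and the argument as written does not close.

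You apply the (translation) Pohozaev identity on a \emph{large} ball $B_{Rd_n}(\bar x)$ containing all rescaled spikes, correctly observe that the leading boundary term is the stress-energy flux of the harmonic limit $\tilde v_\infty$ and therefore vanishes, and then appeal to an unspecified ``sub-leading'' expansion to get a balance condition that you never derive. The paper avoids this entirely by applying the translation Pohozaev identity on a \emph{small} ball $B_r(z_j)$ around a \emph{single} rescaled pole $z_j$. On such a ball the limit $\tilde G=\tilde v_\infty$ is \emph{not} harmonic (it has a singularity at $z_j$), so the boundary integral does \emph{not} vanish at leading order: writing $\tilde G(x)=M_{p,0}C_N\bigl(|x-z_j|^{2-N}+F_j(x)\bigr)$ with $F_j$ smooth near $z_j$, the identity reduces after sending $r\to0$ to $\nabla F_j(z_j)=0$, i.e.
\[
\sum_{i\neq j}\frac{z_j-z_i}{|z_j-z_i|^N}=0\qquad\text{for every }j.
\]
No sub-leading analysis is needed. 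The contradiction then follows from the elementary observation (the paper's Lemma~\ref{lemmafinal}) that this system has no solution with the $z_i$ distinct: pick $j$ maximizing any fixed coordinate and note the corresponding component of the sum is strictly negative. Your centroid normalization $\sum_i y_i=0$ plays no role.

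Two minor points. First, your ``short-distance regime'' is already built into Definition~\ref{spikessequencesdef}(iii), which gives $\delta_n\geq 4\epsilon_nR_n$, so that paragraph is redundant. Second, your $C^2_{\mathrm{loc}}$ convergence of $\tilde v_n$ to $\tilde v_\infty$ is stronger than what is needed or proved: the paper establishes only $C^1$ convergence on compact sets away from the poles (Lemma~\ref{lemmaasymptotic}), via the Green representation and careful estimates on the rescaled regular part $H_n$, and that suffices for the boundary integral in the Pohozaev identity.
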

\begin{remark}
    As already noticed in \cite{bartolucci-jevnikar-wu-2025-CalcVar}, the assumption \eqref{HypothesisAintroduction} is necessary to ensure that all possible singularities of $v_n$ are of spike type. Also, assumption \eqref{HypothesisBintroduction} is rather natural for this kind of problem, we refer again to \cite[Remark 1.17]{bartolucci-jevnikar-wu-2025-CalcVar}, for more details.
\end{remark}

\bigskip

Another important issue in the analysis of solutions of \eqref{plasma3} is the behavior of spike sequences at the boundary $\p\O$. In \cite{bartolucci-jevnikar-wu-2025-CalcVar}, the authors 
have ruled out the possible occurrence of boundary spike points in case of \emph{convex} domains $\O$. On the other hand, for a general domain $\Omega$, they were only able to conclude that, under the assumptions of Theorem \ref{Simplicitytheorem}, if a spike develops \acc around'' a sequence of points $(x_n)_n$, then
\begin{equation*}
    \frac{\mathrm{dist}(x_n,\p\O)}{\epsilon_n}\to+\infty \quad \text{as $n\to+\infty$},
\end{equation*}
where we remind that $\epsilon_n^2=\mu_n^{-1}$ is the scaling parameter of the spike, see Lemma 7.3 in \cite{bartolucci-jevnikar-wu-2025-CalcVar} for more details. 
In particular, there exists $R_n\to+\infty$ such that  $\epsilon_nR_n\to0^+$, as $n\to+\infty$, and $B_{\epsilon_nR_n}(x_{i,n})\Subset\O,\,\, \forall n\in\N$ and for every $(x_{i,n})_n$ converging to a boundary spike point. As a consequence, any such sequence of boundary spikes concentrates at a faster rate with respect to the rate of convergence towards the boundary, hence it must have the same profile and the same mass $M_{p,0}$ of an interior spike.\footnote{Notice that these properties are implicit in the formal definition of spike sequence given in Section \ref{sec:Preliminaries} below.}

In this paper, we prove the nonexistence of boundary spike points for \eqref{plasma3}.
\begin{theorem}\label{Boundaryspikestheorem}
Let $v_n$ be a sequence of solutions for \eqref{plasma3} which satisfies \eqref{HypothesisAintroduction}; moreover, let us assume that either \eqref{convexityintroduction} or \eqref{HypothesisBintroduction} holds.
Let $\Sigma$ be the spike set relative to $v_n$ (see Definition \ref{def:spikeset}). Then $\Sigma\subset\O$, that is, there are no spike points at the boundary of $\Omega$.
\end{theorem}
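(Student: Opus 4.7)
The plan is to argue by contradiction, combining a \emph{localized Pohozaev identity} at the hypothetical boundary spike with the Green's function expansion of $v_n$ on $\overline{\Omega}\setminus\Sigma$ established in Theorem 1.13 of \cite{bartolucci-jevnikar-wu-2025-CalcVar}. The resulting balancing condition involves the Robin function of $\Omega$, whose boundary blow-up will provide the contradiction once tested against the inner normal direction.

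Assume $z\in\Sigma\cap\partial\Omega$. Among all spike sequences converging to $z$, fix the one with the smallest distance to $\partial\Omega$, call it $(x_n)_n$, and set $d_n:=\mathrm{dist}(x_n,\partial\Omega)$. By \cite[Lem.~7.3]{bartolucci-jevnikar-wu-2025-CalcVar}, $d_n/\epsilon_n\to+\infty$, so $d_n\to 0$. Choose $\rho_n\to 0$ with $\rho_n\le d_n/2$ and $\rho_n/\epsilon_n\to+\infty$, so that $B_{\rho_n}(x_n)\Subset\Omega$ and $[v_n-1]_+\equiv 0$ on $\partial B_{\rho_n}(x_n)$. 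Multiplying the first equation in \eqref{plasma3} by $\partial_j v_n$ and integrating by parts on $B_{\rho_n}(x_n)$ yields the local identity
\[
\int_{\partial B_{\rho_n}(x_n)}\Big(\tfrac{\nu_j}{2}|\nabla v_n|^2-\partial_\nu v_n\,\partial_j v_n\Big)\,d\sigma=0,\qquad j=1,\dots,N.
\]
On the annulus $B_{\rho_n}(x_n)\setminus B_{R\epsilon_n}(x_n)$, matching the inner spike profile with the outer Green's function expansion of \cite[Thm.~1.13]{bartolucci-jevnikar-wu-2025-CalcVar} gives $\mu_n^{(N-2)/2}v_n(x)=m\,G_\Omega(x,x_n)+\sum_{k\ne 1}m_k G_\Omega(x,x_{k,n})+o(1)$, with $m$ the mass of the spike at $x_n$. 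Writing $G_\Omega(x,y)=c_N|x-y|^{-(N-2)}-H(x,y)$ and performing a standard asymptotic calculation (the singular part integrates to zero by spherical symmetry, and only cross terms with the regular part survive, extracting its gradient at the pole), one deduces the balancing condition
\[
\tfrac{m}{2}\,\nabla\rho(x_n)=\sum_{k\ne 1}m_k\,\nabla_x G_\Omega(x_n,x_{k,n})+o(1)\quad(n\to\infty),
\]
where $\rho(x):=H(x,x)$ is the Robin function of $\Omega$.

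The \textbf{main technical point}, namely how to rule out the clustering of several spike sequences at $z\in\partial\Omega$, is handled via a \emph{directional} analysis. Let $\nu_n$ denote the inner unit normal to $\partial\Omega$ at the nearest boundary point of $x_n$. Boundary straightening and the half-space model $\rho_{\mathbb R^N_+}(y)=c_N(2y_N)^{-(N-2)}$ give $\nabla\rho(x_n)\cdot\nu_n\le -c\,d_n^{-(N-1)}\to-\infty$. On the right-hand side, terms with $z_k\ne z$ contribute $O(1)$ because $G_\Omega(\,\cdot\,,x_{k,n})$ vanishes on $\partial\Omega$ with Poisson-kernel-bounded gradient; for cluster members $x_{k,n}$ (so $z_k=z$), the image method in the half-space yields
\[
\nabla_x G_{\mathbb R^N_+}(x_n,x_{k,n})\cdot\nu_n=c_N(N-2)\Big[(d_{k,n}-d_n)|x_n-x_{k,n}|^{-N}+(d_n+d_{k,n})|x_n-x_{k,n}^*|^{-N}\Big]\ge 0,
\]
where $x_{k,n}^*$ is the reflection of $x_{k,n}$ across $\partial\Omega$ and the inequality uses the minimality $d_{k,n}\ge d_n$; the correction $\nabla_x(G_\Omega-G_{\mathbb R^N_+})$ from boundary curvature is bounded, hence $\nabla_x G_\Omega(x_n,x_{k,n})\cdot\nu_n\ge -C$. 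Thus the right-hand side of the balancing condition tested against $\nu_n$ stays bounded below, while the left-hand side diverges to $-\infty$ at the rate $d_n^{-(N-1)}$: a contradiction. This proves $\Sigma\subset\Omega$.
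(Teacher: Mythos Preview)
Your overall strategy matches the paper's: argue by contradiction, derive a Pohozaev balancing condition at a hypothetical boundary spike, and contradict it by testing against the inner normal. Your key sign observation --- that $\nabla_x G_{\mathbb R^N_+}(x_n,x_{k,n})\cdot\nu_n\ge 0$ whenever $d_{k,n}\ge d_n$ --- is exactly what drives the paper's concluding inequality on the $N$-th component of \eqref{Finalequation2B}.

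There is, however, a genuine gap in the way you obtain the balancing condition. You assert the expansion $\mu_n^{(N-2)/2}v_n=m\,G_\Omega(\cdot,x_n)+\sum_{k\ne1}m_kG_\Omega(\cdot,x_{k,n})+o(1)$ on $\partial B_{\rho_n}(x_n)$ by citing Theorem~\ref{BJWtheorem}, but that result only furnishes the expansion on $\Omega\setminus\Sigma_r$ for \emph{fixed} $r>0$; since $\rho_n\to 0$, your sphere eventually sits inside every $\Sigma_r$, and the cited theorem says nothing there. Moreover, an $o(1)$ error in $C^1$ on $\partial B_{\rho_n}$ would not suffice anyway: after the Pohozaev computation the leading terms are of order $d_n^{-(N-1)}\to\infty$, so for the normal-direction comparison to yield a contradiction you need the remainder to be $o\big(d_n^{-(N-1)}\big)$. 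The same objection applies to your claim that the curvature correction $\nabla_x(G_\Omega-G_{\mathbb R^N_+})$ is ``bounded'': a priori one only has $|\nabla_x H_\Omega|\le C\,d_\Omega^{-(N-1)}$ (Lemma~\ref{Hproperty}), so boundedness of the difference is not automatic and is in any case not the right smallness. The paper handles all of this by first rescaling by $d_n$, so that the domains converge to the half-space and every relevant quantity lives at unit scale; it then proves from scratch a $C^1$ expansion of the rescaled solution in terms of the half-space Green's function (Lemma~\ref{approximationlemmaboundary}), resting on the $C^1$ convergence of the rescaled regular part to that of $\mathbb R^N_-$ (Lemmas~\ref{Robinbounded} and~\ref{approximateRobinlemma}). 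With this in hand, the Pohozaev identity is applied on a ball of \emph{fixed} radius $r$ in the rescaled variables, letting $n\to\infty$ first and $r\to 0$ afterwards --- cleanly separating the two limits you attempt to take simultaneously. That rescaling-plus-comparison step is the technical core of the argument, and your sketch omits it.
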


Before commenting on the proof, we make some observations.
\begin{remark}
\begin{itemize}
    \item[(a)]
    Boundary spikes have been studied at lenght in case of singularly perturbed semilinear problems with Neumann boundary conditions, starting with the seminal results in \cite{ni-takagi-1991-CPAM-shape}, \cite{ni-takagi-1993-Duke-locating}. On the other side, as far as we are concerned with Dirichlet boundary conditions, the situation seems to be more subtle since for model nonlinearities (\cite{ni-wei-1995-CPAM-spike-layer-location-dirichlet},\cite{wei-2001-ProcEdinb-multiple-condensation}) spikes generally arise at interior points. However, boundary spikes have also been constructed in \cite{li-peng-2015-CalcVar-multi-peak} for a problem similar to \eqref{plasma3}, but with an additional weight function and disregarding the integral constraint. \\
    Also, in the \acc physical'' dimension $N=2$, for $p=1$ and when considering the Grad-Shafranov operator in place of the Laplacian, Caffarelli-Friedman \cite{caffarelli-friedman-1980-Duke} proved that the spikes converge toward the boundary in the limit. Hence we see that Theorem \ref{Boundaryspikestheorem} is somewhat surprising in this context, perhaps due to the strong rigidity of the model equation \eqref{eq:plasma-model-eq}. 
    \item[(b)]
    There are many other nonlinear problems with Dirichlet boundary conditions, such as Liouville type equations (\cite{nagasaki-suzuki-1990-asymptoticanalysis},\cite{ma-wei-2001-Comment.math.Helv-liouville-pohozaev}), SU(3)-Toda systems (\cite{lin-wei-zhao-2012-Manuscripta-Todasystem}) and fourth order mean field equations (\cite{robert-wei-2008-Indiana-fourthorder}), for which it is possible to \emph{exclude} concentration on the boundary. Here the dimensions are $N=2$ (for Liouville and Toda system) and $N=4$ (for the fourth order problem in \cite{robert-wei-2008-Indiana-fourthorder}) and the singular behavior is the one typical of the equations with critical nonlinearity. In \cite{lin-wei-zhao-2012-Manuscripta-Todasystem} and \cite{robert-wei-2008-Indiana-fourthorder}, the proof of no boundary blow-up relies on a subtle application of a general Pohozaev identity at the boundary. However, their method does not provide a contradiction in our case, as the aforementioned identity is indeed verified (at least at main order). On the other hand, our approach still employs a Pohozaev-type identity but for a rescaled problem. This is more in the spirit of \cite{ma-wei-2001-Comment.math.Helv-liouville-pohozaev}, where the identity has been used for the location of blow-up points; here instead we get a contradiction because of the fact that there are no \acc stable'' spike configurations for our problem in the limit domain (which is the half-plane).\\
    Lastly, we cannot pursue a moving plane type argument, as e.g. in \cite{ma-wei-2001-Comment.math.Helv-liouville-pohozaev} or \cite{han-1991-AIHP-boundary-moving-planes} (which, in turn, relies on the arguments of \cite{defigueiredo-lions-nussbaum-1982-JmathP-movingplanes}).

    \item[(c)] Theorem \ref{Boundaryspikestheorem} solves an open problem stated in \cite{bartolucci-jevnikar-wu-2025-CalcVar} and, together with Theorem \ref{Simplicitytheorem}, refines Theorem 1.13 in \cite{bartolucci-jevnikar-wu-2025-CalcVar} (that is, Theorem \ref{BJWtheorem} below). In particular, we now know that any spike sequence for \eqref{plasma3} only develops simple interior spikes located at critical points of the Hamiltonian \eqref{Hamiltonian}. This provides a converse to the existence result of \cite{wei-2001-ProcEdinb-multiple-condensation}; we also notice that all solutions constructed in \cite{wei-2001-ProcEdinb-multiple-condensation} do satisfy the energy assumption \eqref{HypothesisBintroduction} of our theorems. 
    At last, Theorem \ref{Simplicitytheorem} is a refinement of Corollary 1.14 in \cite{bartolucci-jevnikar-wu-2025-CalcVar}, which claims that, in a convex domain, the spike set consists in a single point which coincides with the unique harmonic center of $\om$. 
    It follows from Theorem \ref{Simplicitytheorem} that the unique spike point is also simple. 
    \end{itemize}
\end{remark}

\medskip

\noindent
We now briefly comment on the proofs of Theorem \ref{Simplicitytheorem} and Theorem \ref{Boundaryspikestheorem}.

\medskip

Theorem \ref{Simplicitytheorem} is proved in Section \ref{sec:interiorspikes}. By contradiction, we suppose the existence of $l>1$ sequences of points $\{(x_{i,n})_n\}_{i=1}^l$ which converge to the same spike point $z\in\O$. Then, as it is usually done in these situations (see e.g.\cite{li-shafrir-1994-Indiana}), we define a scaling factor $\delta_n$ as the \emph{minimal distance} between any two points of these sequences, which, up to a permutation of indices, we might assume to be given by $\delta_n:=|x_{1,n}-x_{2,n}|$. We now define the scaled sequence
\[
\hat v_n(x):=v_n(x_{1,n}+\delta_n x),
\]
which satisfies 
\begin{equation*}
    \begin{cases}
    -\D \hat v_n=\hat\mu_n[\hat v_n-1]_+^p\,\, &\text{in}\, \O_n, \\[0.5ex]
    \hat v_n=0 \,\, &\text{in}\, \p\O_n,  
\end{cases}
\end{equation*}
with $\hat\mu_n:=\delta_n^2\mu$ and $\O_n:=\delta_n^{-1}(\O-x_{1,n})$. We notice that $\hat\mu_n\to+\infty$ and $\O_n\to\R^N$, as $n\to+\infty$. Also, by denoting with
\[
z_{i,n}:=\delta_n^{-1}(x_{i,n}-x_{1,n}), \qquad i\in\{1,\dots,l\},
\]
then we deduce that $z_{1,n}=0$ and $|z_{2,n}|=1$, $\forall n\in \N$. For the sake of exposition, we assume that $l=2$, and, up to a subsequence, $z_{1,n}\to z_1=0$ and $z_{1,n}\to z_2$, with $|z_2|=1$. For a small $r>0$, we denote by $\widehat\Sigma_r$ the union of the $r$-neighborhoods of all the $z_i$'s. 
Firstly, we deduce an asymptotic $C^1$ estimate for $\hat v_n$ in $B_{2R}\backslash\widehat\Sigma_r$. In particular, we obtain that the asymptotic profile of $\hat\epsilon_n^{2-N}\hat v_n$ is $C^1$-close to the sum of the fundamental solutions of the Laplace equation centered in the spikes points $z_1$ and $z_2$, cf. Lemma \ref{lemmaasymptotic}. 
Once this estimate is established, a Pohozaev-type identity implies that $z_1$, $z_2$ must satisfy
\begin{equation*}
 \frac{(z_1-z_2)}{|z_1-z_2|^N}=0,
\end{equation*}
which gives a contradiction since $z_1\not=z_2$.

\bigskip

The proof of Theorem \ref{Boundaryspikestheorem} relies on a similar approach. Again, we can argue by contradiction and assume that there exists one or more sequences of spikes converging toward some point $\bar{x}\in \partial \Omega$. Assuming for simplicity to only have one spike along a sequence $x_n\to\bar{x}$ as $n\to+\infty$, we can scale by the distance with respect to the boundary $d_n:= \mathrm{dist}(x_n,\partial \Omega)$. Then, up to a further translation and rotation, we can assume that the rescaled domains $\Omega_n$ converge toward the half-space $\R^N_-:=\{x\in \R^N\mid x_N<0\}$, while the rescaled sequence, which we call $\tilde{v}_n$, now has a spike arising at the point $z_1=(0,\dots,0,-1)\in \R^N_-$.

The next idea is to argue similarly to Theorem \ref{Simplicitytheorem}, with the difference that now we expect the asymptotic profile of $\tilde{\epsilon}_n^{2-N}\tilde{v}_n$ to be $C^1$-close to that of the Green's function for the half-space $\R^N_-$, cf. Lemma \ref{approximationlemmaboundary}. This requires additional care as we need to look at the behavior of the regular part of the Green's function when both its arguments are approaching the boundary.

Once this estimate is established, a contradiction can be reached again by means of a Pohozaev-type identity. In particular, when dealing with only one spike point $z_1$, we obtain the identity
\[
\frac{z_1-\bar z_1}{|z_1-\bar z_1|^N}=0,
\]
(here $\bar z$ is the reflection of $z$ with respect to $\R^N_-$), which implies a contradiction. In the general case, we conclude by means of a similar identity, see Section \ref{subsec:boundarypohozaev}.

\bigskip

We end the Introduction with a brief description of the contents in the next sections. In Section \ref{sec:Preliminaries}, we provide the precise definitions of spike points and spike sequences and we further recall some theorems (obtained in \cite{bartolucci-jevnikar-wu-2025-CalcVar}) on the analysis of \eqref{plasma3}. In Section \ref{sec:interiorspikes}, we analyze the formation of interior spikes and prove Theorem \ref{Simplicitytheorem}. Finally, the analysis of boundary spikes sequences, as well as the proof of Theorem \ref{Boundaryspikestheorem}, is carried out in Section \ref{sec:Boundaryspikes}.

\section{Preliminaries}\label{sec:Preliminaries}

In this Section, we recall all the terminology and definitions about spike points and spike sequences which is needed throughout the paper, as well as the precise statement of the condensation-vanishing Theorem by \cite{bartolucci-jevnikar-wu-2025-CalcVar}, which is the starting point of our analysis. 

\begin{definition}[\cite{bartolucci-jevnikar-wu-2025-CalcVar}]\label{def:spikeset}
Let us set $\epsilon_n^2=\mu_n^{-1}$. The \emph{spike set} relative to a sequence of solutions of \eqref{plasma2} is the largest set $\Sigma\subset\overline{\O}$ such that, for any $z\in\Sigma$, there exists $x_n\to z$, $\sigma_z>0$ and $M_z>0$ with the following properties:
 \begin{itemize}
  \item $v_n(x_n)\geq 1+\sigma_z$, for every $n\in \N$;
  \item $\epsilon_n=o(\mathrm{dist}(x_n,\p\O))$, as $n\to+\infty$;
  \item $\mu_n^{\frac{N}{2}}\int_{B_r(z)\cap\O}\vnpp\leq M_z$, $\forall \,r>0$.
 \end{itemize}
 Any point $z\in \Sigma$ is said to be a \emph{spike point}.
\end{definition}

\begin{definition}[essentially \cite{bartolucci-jevnikar-wu-2025-CalcVar}]\label{spikessequencesdef}
    A sequence $(v_n)_n$ of solutions of \eqref{plasma2} with nonempty spike set $\Sigma=\{z_1,\dots,z_l\}\subset\overline\O$ is said to be a \emph{spike sequence} (relative to $\Sigma$) if:
\begin{itemize}
 \item there exist (strictly) positive integers $m_1,\dots,m_l$ and sequences $(z_{i,n})_n$, $i=1,\dots,Z:=\sum_{k=1}^l m_k$, with $z_{i,n}\to z_j$, as $n\to+\infty$, whenever $\sum_{k=0}^{j-1} m_k< i< \sum_{k=0}^{j} m_k$ (here $m_0:=0$);
 \item there exist $R_n\to+\infty$ as $n\to+\infty$, a natural number $\overline n\in\N$ and positive constants $\tilde C>0,$ $\sigma>0$, such that the following hold: 
 \begin{itemize}
  \item [(i)] $\epsilon_n R_n\to 0$, as $n\to+\infty$;
  \item [(ii)] it holds $$ B_{4R_n}(0)\Subset\O_{i,n}:=\tfrac{\O-z_{i,n}}{\epsilon_n}, \,\,\,\, \forall i\in  \{1,\dots,Z\};$$
  \item [(iii)] if $Z\geq2$, then
  \[
   B_{2\epsilon_nR_n}(z_{j,n})\cap B_{2\epsilon_nR_n}(z_{k,n})=\emptyset,\,\,\,\, \forall j\neq k,
  \]
and
\[
 v_n(z_{i,n})=\underset{\overline\O\backslash \bigcup_{j=1,j\neq i}^Z B_{2\epsilon_n R_n}(z_{j,n})\}}\max \,v_n\geq 1+\sigma,\,\,\,\, \forall i\in\{1,\dots,Z\};
\]
\item [(iv)] for each $i\in\{1,\dots,Z\}$, the rescaled functions
\begin{equation}\label{eq:scaled-spike-def}
 w_{i,n}(y):=v_n(\epsilon_n y+z_{i,n}),\,\,\,\, y\in \O_{i,n},
\end{equation}
satisfy
\[
 \|w_{i,n}-w_0\|_{C^{2}(B_{2R_n}(0))}\to0 \quad \text{as $n\to+\infty$};
\]
moreover, the ``plasma region'' $\O_{n,+}:=\{x\in\O \mid v_n>1\}$ satisfies 
\begin{equation}
    \label{positivityregion}
\Omega_{n,+}\Subset \bigcup_{i=1}^Z B_{2\epsilon_nR_0}(z_{i,n}) \qquad \forall n\geq \bar{n},
\end{equation}
see \eqref{eq:w_0-def}, \eqref{eq:M_p,0-def} below for the definitions of $w_0$ and $R_0$;
\item [(v)] if we set $\Sigma_r:=\underset{j=1,\dots,l}\bigcup B_{r}(z_j)$, for some $r>0$, and $\tilde\Sigma_n:=\underset{i=1,\dots,Z}\bigcup B_{2\epsilon_n R_n}(z_{i,n})$, then there exists a positive constant $C_r$ (depending upon $r$) such that
\[
 0\leq \underset{\overline\O\backslash\Sigma_r}\max\, v_n\leq C_r \epsilon_n^{N-2},\hspace{1,5cm}0\leq \underset{\overline\O\backslash\tilde\Sigma_n}\max\, v_n\leq \frac{\tilde C} {R_n^{N-2}},\,\,\,\,\forall n\geq\overline n;
\]
\item [(vi)] each sequence $(z_{i,n})_n $ carries a fixed amount of \acc mass'' $M_{p,0}$:
\[
 \underset{n\to\infty}\lim \underset{B_{\epsilon_n R_n}(z_{i,n})}\int \mu_n^{\frac{N}{2}}\vnpp=\underset{n\to\infty}\lim \underset{B_{R_n}(0)}\int \mu_n^{\frac{N}{2}}[w_{i,n}-1]_+^p=M_{p,0}.
 \]
Moreover, for any $\varphi\in C^0(\overline{\O})$,
\[\underset{n\to\infty}\lim \underset{\O}\int \mu_n^{\frac{N}{2}}\vnpp \varphi\,dx=M_{p,0}\sum_{j=1}^l m_j\varphi(z_j).
\]
 \end{itemize} 
\end{itemize}
We call a \emph{spike} each local shrinking profile in $B_{\epsilon_n R_n}(z_{i,n})$ which, after scaling as in \eqref{eq:scaled-spike-def}, is approximating $w_0$.
\end{definition}

\begin{remark}
    Definition \ref{spikessequencesdef} is slightly less general than Definition 1.8 in \cite{bartolucci-jevnikar-wu-2025-CalcVar}; however, in our situation (i.e. under the assumptions of Theorem \ref{BJWtheorem}), those two definitions are essentially equivalent.
\end{remark}

\begin{definition}\label{def:simplespike}
     In the above definition, we call $m_j$ the \emph{multiplicity} of the spike point $z_j$, $\forall j=1,\dots,l$. In other words, a spike point $z$ has multiplicity $m$ whenever there are exactly $m$ spikes clustering at $z$, as $n\to+\infty$. In particular, when $m=1$, we say that $z$ is a \emph{simple} spike point.
\end{definition}

\begin{remark}
Let us define $w_0$ as 
\begin{align}\label{eq:w_0-def}
    w_0(x)=\begin{cases}
       1+R_0^{\frac{2}{1-p}}u(R_0^{-1}|x|) &\text{if}\,\,|x|\in[0,R_0],\\[0.7ex]
        \left(\frac{R_0}{|x|}\right)^{N-2}&\text{if}\,\, |x|\in[R_0,+\infty),
    \end{cases}
\end{align}
where $u$ is the unique radial (\cite{gidas-ni-nirenberg-1979}) solution of
\begin{equation*}
\begin{cases}
    -\D u= u^p\,\,\,\,\, &\text{in}\,\,B_1(0),\\
    u>0&\text{in}\,\,B_1(0), \\
    u=0&\text{on}\,\,\p B_1(0),
\end{cases}
\end{equation*}
and let
\begin{equation*}
    R_0=\left(\frac{-u'(1)}{N-2}\right)^{\frac{p-1}{2}}.
\end{equation*}
It is easy to see that $w_0$ is the unique solution of 
\begin{align*}
\begin{cases}
    -\D w= [w-1]_+^p\,\,\,\,\,\,\,\,\,\text{in}\,\,\R^N,\\
    w>0,\,\,\,\, w(x)\to0  \,\,\,\,\text{as}\,\, |x|\to+\infty,\\
    w(0)=\underset{{x\in\R^N}}\max w(x)>1, \\
    \int_{\R^N}[w-1]_+^p<+\infty.
\end{cases}
\end{align*}
Also, we define the following quantity:
\begin{equation}\label{eq:M_p,0-def}
    M_{p,0}:=\int_{\R^N}[w_0-1]_+^p=\int_{B_{R_0}(0)}[w_0-1]_+^p.
\end{equation}
We direct the interested reader to Lemma 6.2 and Remark 6.5 in \cite{bartolucci-jevnikar-wu-2025-CalcVar} for further comments on $w_0$ and $M_{p,0}$.
\end{remark}

Finally, we state the condensation-vanishing theorem of \cite{bartolucci-jevnikar-wu-2025-CalcVar}:

\begin{theorem}[\cite{bartolucci-jevnikar-wu-2025-CalcVar}, Theorem 1.11]
    Let $v_n$ be a sequence of solutions of \eqref{plasma2} with spike set $\Sigma\subset\bar\O$ such that \eqref{HypothesisAintroduction} holds and
    \begin{equation*}
        \mu_n^{\frac{N}{2}}\int_{\O}v_n^{\frac{2N}{N-2}}\,dx\leq D.
    \end{equation*}
for some positive constant $D$. Then,
\begin{itemize}
    \item (Vanishing) $[v_n-1]_+\to0$ locally uniformly in $\O$,in which case for any open and relatively compact subset $\O_0\Subset\O$ there exists $n_0\in\N$ and $C>0$, depending on $\O_0$, such that $[v_n-1]_+=0$ in $\O_0$ and in particular
    \[
    \|v_n\|_{L^\infty(\O_0)}\leq C\epsilon_n^{\frac{2N}{N-2}},\,\,\,\,\forall n>n_0,
    \]
\end{itemize}
or
\begin{itemize}
    \item (Spikes-Condensation) up to a subsequence the interior spikes set relative to $v_n$ is nonempty and finite, that is $\Sigma_0:=\Sigma\cap\O=\{z_1,\cdots, z_l\}$, for some $m\geq1$ and for any open and relatively compact set $\O'$ such that $\Sigma_0\subset\O'\Subset\O$, $\{v_n|_{\O'}\}$ is a spike sequence (relative to $\Sigma_0$). 
\end{itemize}
\end{theorem}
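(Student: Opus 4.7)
The plan is to establish the dichotomy by a standard blow-up procedure combined with a quantization-of-mass argument. First I would examine whether $\sup_\Omega [v_n-1]_+$ stays bounded away from zero along a subsequence. If not, then $[v_n-1]_+\to 0$ uniformly, and the equation $-\Delta v_n=\mu_n[v_n-1]_+^p$ has a controlled source; a Green's function representation combined with the bounds $\mu_n^{N/2}\int_\Omega v_n^{2N/(N-2)}\le D$ and $\mu_n^{N/2}\int_\Omega[v_n-1]_+^p\le C_0$ should yield the decay $\|v_n\|_{L^\infty(\Omega_0)}\le C\epsilon_n^{2N/(N-2)}$ on any $\Omega_0\Subset\Omega$, and once $[v_n-1]_+<1$ uniformly on $\Omega_0$, we conclude $[v_n-1]_+\equiv 0$ on $\Omega_0$ for large $n$ by the definition of the positive part.

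In the non-vanishing case there exist $x_n\in\Omega$ and $\sigma>0$ with $v_n(x_n)\ge 1+\sigma$. I would rescale by setting $w_n(y):=v_n(x_n+\epsilon_n y)$ with $\epsilon_n^2=\mu_n^{-1}$, so that $w_n$ satisfies the clean model equation $-\Delta w_n=[w_n-1]_+^p$ on $\Omega_n:=\epsilon_n^{-1}(\Omega-x_n)$, with $\int_{\Omega_n}[w_n-1]_+^p\le C_0$ and $\int_{\Omega_n}w_n^{2N/(N-2)}\le D$ inherited from the assumptions. A key step is to show that $\mathrm{dist}(x_n,\partial\Omega)/\epsilon_n\to+\infty$ inside $\Omega$: if the quotient stayed bounded, then after translation and rotation $w_n$ would converge to a nontrivial nonnegative solution on a half-space with zero Dirichlet data, which is ruled out by symmetry/moving-plane or Pohozaev arguments applied to the model equation. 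Consequently $w_n\to w_0$ in $C^2_{\mathrm{loc}}(\mathbb{R}^N)$ by uniqueness and the radial classification of entire solutions, producing one interior spike profile centered at $x_n$.

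Next I would iterate the blow-up: having located $k$ spike sequences $(z_{i,n})_n$, I examine the supremum of $v_n$ outside $\bigcup_{i\le k} B_{2\epsilon_n R_n}(z_{i,n})$ for a slowly diverging $R_n$. Either this supremum stays below $1+\sigma'$ for some fixed $\sigma'>0$ (and the procedure terminates), or a new point $x_{k+1,n}$ with $v_n(x_{k+1,n})\ge 1+\sigma'$ appears, and the argument above produces a further spike; since each spike carries mass $M_{p,0}+o(1)$ by the local $C^2$ convergence $w_n\to w_0$ on $B_{R_0}(0)$ and the total mass is bounded by $C_0$, the iteration stops after at most $\lfloor C_0/M_{p,0}\rfloor$ steps. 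The quantitative decay $\sup_{\overline\Omega\setminus\Sigma_r}v_n\le C_r\epsilon_n^{N-2}$ and $\sup_{\overline\Omega\setminus\widetilde\Sigma_n}v_n\le \tilde C/R_n^{N-2}$ would then follow from the Green's function representation, using that the source $\mu_n[v_n-1]_+^p$ has mass of order $\mu_n^{-N/2}$ concentrated within the spike balls.

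The main obstacle I anticipate is the combination of the boundary-exclusion step with the sharp quantization of mass. Ruling out spikes clustering near $\partial\Omega$ at the first-order blow-up stage requires a nonexistence result on the half-space for $-\Delta w=[w-1]_+^p$ with zero boundary data; and showing that the mass absorbed by each spike is exactly $M_{p,0}+o(1)$ (with no mass leaking into intermediate annular regions between $\epsilon_n$ and the macroscopic scale) requires a delicate Harnack-type control interpolating between the microscopic scale $\epsilon_n$, the intermediate scale $\epsilon_n R_n$, and the distance between distinct spike clusters.
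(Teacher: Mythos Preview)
The paper does not prove this theorem. It is quoted in Section~\ref{sec:Preliminaries} as Theorem~1.11 of \cite{bartolucci-jevnikar-wu-2025-CalcVar} (``Finally, we state the condensation-vanishing theorem of \cite{bartolucci-jevnikar-wu-2025-CalcVar}:\,\dots'') and no argument is supplied; it is used purely as background for the paper's own results (Theorems~\ref{Simplicitytheorem} and~\ref{Boundaryspikestheorem}). So there is no ``paper's own proof'' to compare your proposal against.

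That said, your outline is broadly the standard concentration--compactness scheme one expects for such a result, and it is a reasonable roadmap. Two points are worth flagging. First, in the vanishing branch your sentence ``once $[v_n-1]_+<1$ uniformly on $\Omega_0$, we conclude $[v_n-1]_+\equiv 0$ on $\Omega_0$ for large $n$ by the definition of the positive part'' is not correct as written: $[v_n-1]_+<1$ only says $v_n<2$, not $v_n\le 1$. What is actually needed is to upgrade $[v_n-1]_+\to 0$ to $v_n<1$ on $\Omega_0$; this requires using the equation together with the integral bound on $v_n^{2N/(N-2)}$ to force $v_n$ itself small, not just $[v_n-1]_+$. Second, your half-space nonexistence step (``ruled out by symmetry/moving-plane or Pohozaev'') is exactly the content of \cite[Lemma~7.3]{bartolucci-jevnikar-wu-2025-CalcVar}, which yields $\mathrm{dist}(x_n,\partial\Omega)/\epsilon_n\to+\infty$; note that this is strictly weaker than excluding boundary spike \emph{points} (limits $z\in\partial\Omega$), which is precisely the new contribution of the present paper. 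Your sketch correctly identifies both of these as the delicate spots.
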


\noindent However, if $v_n$ is a solution of \eqref{plasma3}, then we have a better understanding of the internal and boundary behavior of the spike condensating sequences. \\
Firstly, we define the $\vec k-$\emph{Kirchhoff-Routh Hamiltonian}
\begin{equation}
\label{Hamiltonian}
\mathcal{H}(x_1,\dots,x_m;\vec k):=\sum_{i=1}^m k_j^2 H(x_i,x_i)+\sum_{i\neq j=1}^m k_ik_jG(x_i,x_j),
\end{equation}
where $\vec k=(k_1,\dots,k_m)\in\R^m,\,\,k_i\neq0$, $G(x,y)=G_\O(x,y)$ is the Green function with Dirichlet boundary condition for $\O$, that is
\begin{equation}\label{Greendef}
\begin{cases}
-\D_x G(x,y)=\delta_y(x)\,\, &x\in\O, \\
G(x,y)=0\,\, &x\in\p\O,
\end{cases}
\end{equation}
and $H$ is the regular part of the Green function
\begin{equation}\label{Robindef}
H(x,y)=H_{\Omega}(x,y):=G(x,y)-\frac{C_N}{|x-y|^{N-2}},
\end{equation}
where $C_N=\tfrac{1}{N(N-2)\omega_N}$ and $\omega_N$ is the volume of the $N$-dimensional unit ball. \\ In \cite{bartolucci-jevnikar-wu-2025-CalcVar}, the authors proved the following result
\begin{theorem}[\cite{bartolucci-jevnikar-wu-2025-CalcVar}, Theorem 1.13]\label{BJWtheorem}
Let $v_n$ be a sequence of solutions for \eqref{plasma3} which satisfies  \eqref{HypothesisAintroduction}.
Moreover, let us assume that either \eqref{convexityintroduction} or \eqref{HypothesisBintroduction} holds.
Then, up to a subsequence, $v_n$ is a spike sequence relative to $\Sigma$, for some $\Sigma=\{z_1,\cdots,z_l\}$, $l\geq1$. Moreover, by setting $\Sigma_r=\bigcup_{i=1,\dots,m}B_r(z_j)$ for some $r>0$, then
\begin{equation*}
\label{Greenapproximation}
v_n(x)=\epsilon_n^{N-2}\sum_{i=1}^{Z} M_{p,0}G(x,z_{i,n})+ o(\epsilon_n^{N-2}),\,\, \forall x\in \O\backslash \Sigma_r,
\end{equation*}
for any $n$ large enough and we have the following ``mass quantization identity":
\begin{equation*}
\lim_{n\to\infty} \bigg(\frac{\lambda_n}{|\alpha_n|^{1-\frac{p}{p_N}}}\bigg)^\frac{N}{2}=\lim_{n\to\infty} \underset{\O}\int \mu_n^{\frac{N}{2}}\vnpp=ZM_{p,0}.
\end{equation*} 
At last, if $\Sigma\subset\O$, then, up to a permutation, $(z_1,\dots,z_m)$ is a critical point of the $\vec k-$Kirchhoff-Routh Hamiltonian \eqref{Hamiltonian}, with $\vec k=(k_1,\dots,k_m)$.
\end{theorem}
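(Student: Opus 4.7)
The plan is to apply the condensation-vanishing alternative stated just above to $v_n$, rule out the vanishing case using the Dirichlet datum and the integral constraint of \eqref{plasma3}, and then extract the Green expansion, the mass quantization, and the critical point equation in that order.

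First I would verify that the hypotheses of the alternative are satisfied. The $L^p$-mass bound \eqref{HypothesisAintroduction} is given; the missing $L^{2N/(N-2)}$-type bound follows from elliptic regularity bootstrapping under \eqref{HypothesisBintroduction}, while under \eqref{convexityintroduction} one derives \eqref{HypothesisBintroduction} from \eqref{HypothesisAintroduction} by combining the global Pohozaev identity on $\O$ with the non-negativity of the boundary term $\int_{\p\O}(x\cdot\nu)|\nabla v_n|^2$. Vanishing would force $\|v_n\|_{L^\infty(\O_0)}\leq C\epsilon_n^{2N/(N-2)}$ on every $\O_0\Subset\O$, which combined with $|\alpha_n|^{p-1}=\mu_n/\lambda_n$ and $\mu_n\to\infty$ is incompatible with the normalization $\int_\O|\alpha_n|^p[v_n-1]_+^p=1$. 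Hence, up to subsequence, $v_n$ is a spike sequence relative to some nonempty finite $\Sigma\subset\overline\O$.

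Next, I would derive the Green expansion via the representation formula $v_n(x)=\mu_n\int_\O G(x,y)[v_n(y)-1]_+^p\,dy$. By \eqref{positivityregion} the integrand is supported in the disjoint balls $B_{2\epsilon_n R_0}(z_{i,n})$, and by property (vi) each carries mass $M_{p,0}\mu_n^{-N/2}$; Taylor expanding $G(x,\cdot)$ at $z_{i,n}$ then yields the claimed $\epsilon_n^{N-2}$-expansion with $o(\epsilon_n^{N-2})$ error uniform on $\overline\O\setminus\Sigma_r$. The mass quantization identity is a purely algebraic rearrangement of property (vi) summed over $i=1,\dots,Z$ together with the integral constraint $|\alpha_n|^p\int[v_n-1]_+^p=1$ and the relation $\mu_n=\lambda_n|\alpha_n|^{p-1}$, which together identify $\mu_n^{N/2}\int[v_n-1]_+^p$ with $(\lambda_n/|\alpha_n|^{1-p/p_N})^{N/2}$.

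Finally, assuming $\Sigma\subset\O$, I would derive the critical point condition via a localized Pohozaev identity on each ball $B_r(z_j)\Subset\O$ with $r>0$ small: multiplying the equation by $\p_{x_k}v_n$ and integrating, the nonlinear term collapses to the boundary flux $\tfrac{\mu_n}{p+1}\int_{\p B_r(z_j)}[v_n-1]_+^{p+1}\nu_k$, which is zero for $n$ large since $v_n<1$ on $\p B_r(z_j)$ by property (v); what remains is a quadratic boundary form in $\nabla v_n$. Substituting the Green expansion on $\p B_r(z_j)$ and normalizing by $\epsilon_n^{2(N-2)}$, the limiting boundary integrals reproduce $\p_{x_k}\mathcal{H}(z_1,\dots,z_l;\vec k)$ through the decomposition \eqref{Robindef} of $G$ into singular and regular parts. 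I expect this last step to be the main obstacle: the cross terms arising when multiple sequences $z_{i,n}$ cluster at a single $z_j$ must be matched, order by order in $\epsilon_n$, with the Taylor expansion of the regular part $H$ at $z_j$, and the combinatorics of the multiplicities $m_j$ must come out consistently with the weights $\vec k$ appearing in the Hamiltonian.
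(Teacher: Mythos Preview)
The theorem you are attempting to prove is not proved in this paper at all: it is quoted verbatim from \cite{bartolucci-jevnikar-wu-2025-CalcVar} as background, introduced by ``In \cite{bartolucci-jevnikar-wu-2025-CalcVar}, the authors proved the following result''. There is therefore no proof in the paper to compare your proposal against; the present paper \emph{uses} Theorem~\ref{BJWtheorem} as a black box to launch the arguments of Sections~\ref{sec:interiorspikes} and~\ref{sec:Boundaryspikes}.

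That said, your sketch is broadly in the right spirit for how such a result is obtained in the cited reference: apply the vanishing/condensation alternative, rule out vanishing via the integral constraint, derive the Green expansion from the representation formula plus the support property \eqref{positivityregion}, and obtain the critical-point condition from a local Pohozaev identity. One caution on your final paragraph: in the statement of Theorem~\ref{BJWtheorem} the weights $\vec k=(k_1,\dots,k_m)$ are the multiplicities $m_j$, so the ``combinatorics of the multiplicities'' you flag as an obstacle is precisely what produces the $k_j$'s in the Hamiltonian; there is no further matching to perform beyond collecting the $m_j$ copies of the fundamental solution at each $z_j$.
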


\begin{remark}
 We observe that the thesis of Theorem \ref{Simplicitytheorem} and Theorem \ref{Boundaryspikestheorem} imply that $\Sigma\subset\O$ and $(z_1,\dots,z_m)$ is a critical point of the $\vec k-$Kirchhoff-Routh Hamiltonian \eqref{Hamiltonian}, with $\vec k=(k_1,\dots,k_m)$  satisfying
$$k_i=1,\,\,\,\, \forall i\in\{1,\dots,m\}.$$
\end{remark}


\section{Simplicity of interior spike points}\label{sec:interiorspikes}

\noindent
In this section we will prove Theorem \ref{Simplicitytheorem}. The fact that $v_n$ is a spike sequence is a consequence of Theorem \ref{BJWtheorem}. Let us fix an interior spike point $\overline z\in\O\cap\Sigma$. \\
We prove the result by contradiction, so we start assuming that there exist $l>1$ sequences $x_{1,n},\dots,x_{l,n}$, of local maxima corresponding to $l$ spikes sequences clustering at $\overline z$.\\
Without loss of generality, we assume 
\[
\delta_n:=|x_{1,n}
-x_{2,n}|=\underset{i\neq j}\min\,{|x_{i,n}
-x_{j,n}|}.
\]
Let us define 
\begin{align*}
\hat v_n(x):=v_n(x_{1,n}+\delta_n x),
\end{align*}
and let 
\begin{equation*}
\hat {\mu}_n:=\delta_n^2\mu_n.
\end{equation*}
Then $\hat v_n$ satisfies
\begin{equation}
    \label{hatvneq}
    \begin{cases}
    -\D \hat v_n=\hat\mu_n[\hat v_n-1]_+^p\,\, &\text{in}\, \O_n, \\[0.5ex]
    \hat v_n=0 \,\, &\text{in}\, \p\O_n,  
\end{cases}
\end{equation}
where $\O_n:=\delta_n^{-1}(\O-x_{1,n})$. By reminding that $\epsilon_n^2=\mu_n^{-1}$ and using the fact that $v_n$ is a spike sequence according to Theorem \ref{BJWtheorem}, then $(iii)$ in the definition \ref{spikessequencesdef} holds and, as a consequence,
\begin{equation*}
    \sqrt{\hat{\mu}_n}=\frac{\sqrt{\hat{\mu}_n}}{\sqrt{\mu_n}\epsilon_n}=\frac{\delta_n}{\epsilon_n}>R_n \to+\infty \quad \text{as $n\to+\infty$},
\end{equation*}
hence $\hat{\mu}_n\to+\infty$ as $n\to+\infty$.
Moreover, assumption \eqref{HypothesisAintroduction} implies by a change of variable that
\[
\hat {\mu}_n^{\frac{N}{2}}\int_{\O_n}[\hat v_n-1]_+^p\,dx\leq C_0.
\]
We notice also that, by denoting with 
\[
z_{i,n}:=\delta_n^{-1}(x_{i,n}-x_{1,n}), \,\,\,\,\,\text{for}\,\,i\in\{1,\dots,l\},
\] 
we have that $z_{1,n}=0$ and $|z_{2,n}|=1$ $\forall n$, whence $z_{1,n}\to0=:z_1$ and, up to a subsequence, $z_{2,n}\to z_2\in \S^{N-1}$ as $n\to+\infty$.\\
We do not know a priori the behavior of the other sequences; however, we can assume without loss of generality that there exist $R>0$ and $2\leq k\leq l$ such that
\[
|z_{i,n}|\leq R, \,\,\, \forall n\in\N, \quad\text{for}\,\,i\in\{1,\dots, k\},
\]
while 
\[
|z_{i,n}|\to+\infty \,\,\,\text{as $n\to+\infty$},\quad\text{for}\,\,i\in\{k+1,\dots,l\}.
\]
Moreover, for any $i\in\{1,\dots, k\}$, there exists $z_i\in B_R(0)$ such that, up to a subsequence, $z_{i,n}\to z_i$, as $n\to+\infty$.
Let us define 
\begin{equation*}
\widehat \Sigma_r:=\bigcup_{i=1}^k B_r(z_i),
\end{equation*}
for $r>0$ small. We now proceed in two steps:
    \begin{itemize}
        \item [(1)] First, we derive an asymptotic $C^1$ estimate for $\hat{v}_n$ in $B_{2R}\backslash \widehat{\Sigma}_r$.
        \item [(2)] Secondly, we use a Pohozaev-type identity together with the aforementioned estimate in order to get a contradiction.
    \end{itemize}

\subsection{Pointwise estimates for $\hat{v}_n$}

Let $G_\Omega$ be the Green's function of the Laplacian in $\Omega$ defined in \eqref{Greendef} and $H_\Omega$ be its regular part given by \eqref{Robindef}.
Let us consider $G_n(x,y):=G_{\O_n}(x,y)$, the Green function of the Laplacian in $\O_n$.
It is straightforward to verify that 
\begin{equation}\label{eq:scaledgreen-int}
G_n(x,y)=\delta_n^{N-2} G_{\O}(x_{1,n}+\delta_n x, x_{1,n}+\delta_n y)= \tfrac{C_N}{|x-y|^{N-2}}+ H_n(x,y),
\end{equation}
where 
\begin{equation}
    \label{Hndefinition}
H_n(x,y):=\delta_n^{N-2} H_{\O}(x_{1,n}+\delta_n x,x_{1,n}+\delta_n y)
\end{equation}
is the regular part of the Green function $G_n$ in $\O_n$. \\
Now, we want to estimate $H_n(x,y)$ in a suitable region. Let $\tau>0$ small enough such that $B_\tau(\bar z)\Subset\O$ and $B_{2\tau}(\bar{z})\cap \Sigma=\{\bar{z}\}$; then there exists $L=L(\O,\tau)>0$ such that
$$
\begin{cases}
    |H_\Omega(x,y)|\leq L,    \\
    |\nabla_xH_\Omega(x,y)|\leq L, & \text{in $\overline{\Omega}\times \bar{B}_\tau(\bar{z})$}. 
\end{cases}
$$
By definition of $H_n$, we have that, for $n$ large enough,
\begin{equation*}
\|H_n\|_{C^0(\overline{\O}_n\times \bar{B}_{\frac{\tau}{\delta_n}}(\bar z))}\leq L\delta_n^{N-2},
\end{equation*}
\begin{equation}
\label{Robinscaledderivative}
\|\nabla_xH_n\|_{C^0(\overline{\O}_n\times \bar{B}_{\frac{\tau}{\delta_n}}(\bar z))}\leq L\delta_n^{N-1}.
\end{equation}
In particular, by using the symmetry in $x$ and $y$ of $H_n$, we deduce that
\begin{equation}
\label{Robinscaled}
\|H_n\|_{C^1(\overline{\O}_n\times \bar{B}_{\frac{\tau}{\delta_n}})}=
\|H_n\|_{C^1(\bar{B}_{\frac{\tau}{\delta_n}}\times \overline{\O}_n)}\leq L\delta_n^{N-2}.
\end{equation}
We are now able to estimate $\hat{v}_n$ as follows:
\begin{lemma}\label{lemmaasymptotic}
  Let $\hat\epsilon_n^2:=\hat\mu_n^{-1}$ and $x\in \bar B_{2R}(0)\backslash \widehat\Sigma_r$. Then
\begin{equation*}
    \hat\epsilon_n^{2-N}\hat v_n(x)=\sum_{i=1}^kM_{p,0} \frac{C_N}{|x-z_{i,n}|^{N-2}}+R_n(x),
\end{equation*}
where the remainders $R_n$ satisfy
\begin{equation*}
    \|R_n\|_{L^\infty(\bar{B}_{2R}(0)\backslash \widehat\Sigma_r)}=o_n(1)
\end{equation*}
and 
\begin{equation*}
    \|\nabla R_n\|_{L^\infty(\bar{B}_{2R}(0)\backslash \widehat\Sigma_r)}=o_n(1)
\end{equation*}
as $n\to+\infty$.

\end{lemma}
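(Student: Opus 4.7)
The plan is to combine the Green's representation for $\hat v_n$ with the localization and quantization of its nonlinear mass on the rescaled spikes $z_{i,n}$. Since $\hat v_n$ solves \eqref{hatvneq} on $\Omega_n$ and $\bar B_{2R}(0)\Subset\Omega_n$ for $n$ large (because $\Omega_n\to\mathbb{R}^N$), I start from
$$\hat v_n(x)=\int_{\Omega_n}G_n(x,y)\,\hat\mu_n[\hat v_n-1]_+^p(y)\,dy.$$
Using $\hat\mu_n=\hat\epsilon_n^{-2}$, so that $\hat\epsilon_n^{2-N}\hat\mu_n=\hat\mu_n^{N/2}$, and the decomposition in \eqref{eq:scaledgreen-int}, this rewrites as
$$\hat\epsilon_n^{2-N}\hat v_n(x)=\int_{\Omega_n}\frac{C_N}{|x-y|^{N-2}}\,d\nu_n(y)+\int_{\Omega_n}H_n(x,y)\,d\nu_n(y),$$
where $\nu_n:=\hat\mu_n^{N/2}[\hat v_n-1]_+^p\,dy$. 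By (the rescaled version of) \eqref{HypothesisAintroduction} the total mass $\nu_n(\Omega_n)$ is uniformly bounded; by \eqref{positivityregion} one has $\mathrm{supp}(\nu_n)\subset\bigcup_{i=1}^l B_{2\hat\epsilon_n R_0}(z_{i,n})$; and by item $(vi)$ of Definition~\ref{spikessequencesdef} each of these shrinking balls carries mass $M_{p,0}+o(1)$.

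Next I would analyze the two integrals separately. For the singular part, since $|x-y|\geq r/2$ for $x\in\bar B_{2R}\setminus\widehat\Sigma_r$ and $y$ in the shrinking ball around $z_{i,n}$ with $i\leq k$ (and $n$ large), the kernel $1/|x-y|^{N-2}$ is smooth there, and replacing $y$ by $z_{i,n}$ costs only an $O(\hat\epsilon_n R_0)$ modulus of continuity, giving
$$\int_{B_{2\hat\epsilon_n R_0}(z_{i,n})}\frac{C_N}{|x-y|^{N-2}}\,d\nu_n=\frac{C_N\,M_{p,0}}{|x-z_{i,n}|^{N-2}}+o(1)$$
uniformly in $x$. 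When $i>k$ one has $|z_{i,n}|\to\infty$, hence $|x-z_{i,n}|\to\infty$ uniformly on $\bar B_{2R}$, and these terms contribute only $o(1)$. For the regular part, the support of $\nu_n$ lies inside $\bar B_{\tau/\delta_n}$ (centered near the preimage of $\bar z$) for $n$ large because every $x_{i,n}\to\bar z$, so \eqref{Robinscaled} gives
$$\left|\int_{\Omega_n}H_n(x,y)\,d\nu_n(y)\right|\leq L\,\delta_n^{N-2}\,\nu_n(\Omega_n)=o(1)$$
uniformly in $x$. Summing these estimates produces the $C^0$ part of the claim.

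For the $C^1$ statement, I would first run the above $C^0$ analysis on the slightly larger set $\bar B_{2R}\setminus\widehat\Sigma_{r/2}$. Then I would observe that, for $n$ large, the remainder $R_n$ is \emph{harmonic} on $\bar B_{2R}\setminus\widehat\Sigma_{r/2}$: the plasma region has shrunk strictly inside $\widehat\Sigma_{r/2}$, while each fundamental solution $C_N/|x-z_{i,n}|^{N-2}$ is harmonic away from $z_{i,n}\in\widehat\Sigma_{r/2}$. Standard interior gradient estimates for harmonic functions then upgrade the $C^0$ bound on $\bar B_{2R}\setminus\widehat\Sigma_{r/2}$ to the desired $C^1$ bound on $\bar B_{2R}\setminus\widehat\Sigma_{r}$. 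Alternatively, one can differentiate the Green representation directly and repeat the above argument using the sharper bound $\|\nabla_xH_n\|_{C^0}\leq L\delta_n^{N-1}$ from \eqref{Robinscaledderivative}. The main (minor) obstacle I expect is the careful bookkeeping of the ``far'' spikes $i>k$ and ensuring that all error terms are uniform in $x\in\bar B_{2R}\setminus\widehat\Sigma_r$; the quantitative rate $\hat\epsilon_n R_n\to 0$ together with the uniform mass bound on $\nu_n$ makes this routine once the Green representation is set up.
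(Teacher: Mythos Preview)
Your $C^0$ argument is essentially the paper's own proof, just organized slightly differently: the paper groups the integrals as (near spikes with full $G_n$ minus their approximation) $+$ (far spikes with full $G_n$) $+$ (regular part $H_n$ at the near spike points), while you split first into singular and regular kernels. One small slip: your justification for the regular-part bound (``the support of $\nu_n$ lies inside $\bar B_{\tau/\delta_n}$ \dots because every $x_{i,n}\to\bar z$'') is not correct as stated, since the sequence $v_n$ may also carry spikes converging to \emph{other} spike points in $\Sigma\setminus\{\bar z\}$ (the $\bar m$ extra profiles in the paper's notation), and those do not lie in that ball after rescaling. The fix is immediate: use instead that the \emph{first} argument satisfies $x\in\bar B_{2R}$, hence $x_{1,n}+\delta_n x\in B_\tau(\bar z)$ for $n$ large, and apply the symmetric form of \eqref{Robinscaled}.

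For the $C^1$ part your first suggestion---observing that $R_n$ is harmonic on $B_{2R}\setminus\widehat\Sigma_{r/2}$ for $n$ large and then invoking interior gradient estimates for harmonic functions---is a genuine simplification over the paper, which instead differentiates the Green representation and repeats the entire $(A_n),(B_n)$ analysis with $\partial_{x_j}G_n$ in place of $G_n$. Your route avoids that repetition. Just be careful near $\partial B_{2R}$: to get the gradient bound all the way up to $|x|=2R$ you should run the $C^0$ step on a slightly larger set (say $\bar B_{3R}\setminus\widehat\Sigma_{r/2}$), which the same argument yields at no extra cost.
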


\begin{proof}
By using \eqref{hatvneq}, the Green representation formula for $\hat v_n$ in $\O_n$ and the definition \eqref{eq:scaledgreen-int} of $G_n$, we have that
\begin{align}\nonumber
   R_n(x)&:=\hat\epsilon_n^{2-N}\hat v_n(x)-\sum_{i=1}^kM_{p,0} \tfrac{C_N}{|x-z_{i,n}|^{N-2}} \\ \nonumber
    &=\hat\epsilon_n^{2-N}\hat v_n(x)-\sum_{i=1}^k M_{p,0}G_n(x,z_{i,n})+\sum_{i=1}^{k} M_{p,0}H_n(x,z_{i,n}) \\
    &=\int_{\O_n}G_n(x,y)\hat\mu_n^{\frac{N}{2}}[\hat v_n(y)-1]_+^p\,dy-\sum_{i=1}^k M_{p,0}G_n(x,z_{i,n})+\sum_{i=1}^{k} M_{p,0}H_n(x,z_{i,n}).\label{expression}
\end{align}
Now, by using the fact that $ v_n$ is a spike sequence, we remark that $[{v}_n-1]_+^p$ is positive only in the \acc spike regions'' of ${v}_n$. More precisely, by \eqref{positivityregion}, one has
\[
[{v}_n-1]_+^p=0 \,\,\,\,\,\,\,\text{in}\,\,\O\backslash\bigcup_{i=1}^{l+\bar m}B_{2\epsilon_nR_0}(x_{i,n}),
\]
where $l$ is the total number of spikes that converge to $\bar z$ and $\bar m\geq0$ counts the number of spike profiles of $v_n$ which may converge to other spike points in $\bar \O$. From the latter property, by definition of $\hat v_n$, it is straightforward to deduce that
\begin{equation}
\label{vanishingoutsidespikes}
[{\hat v}_n-1]_+^p=0 \,\,\,\,\,\,\,\text{in}\,\,\O\backslash\bigcup_{i=1}^{l+\bar m}B_{2\hat\epsilon_nR_0}(z_{i,n})
\end{equation}
and then,
\begin{align*}\nonumber
\int_{\O_n}&G_n(x,y)\hat\mu_n^{\frac{N}{2}}[\hat v_n(y)-1]_+^p\,dy= \\
&=\sum_{i=1}^k\int_{B_{2R_0\hat\epsilon_n(z_{i,n})}}G_n(x,y)\hat\mu_n^{\frac{N}{2}}[\hat v_n(y)-1]_+^p\,dy + \sum_{j=k+1}^{l+\bar m}\int_{B_{2R_0\hat\epsilon_n(z_{j,n})}}G_n(x,y)\hat\mu_n^{\frac{N}{2}}[\hat v_n(y)-1]_+^p\,dy.
\end{align*}

We split the calculation of \eqref{expression} into three parts as follows:
\begin{equation}\label{tripartition}
    \int_{\O_n}G_n(x,y)\hat\mu_n^{\frac{N}{2}}[\hat v_n(y)-1]_+^p\,dy-\sum_{i=1}^k M_{p,0}G_n(x,z_{i,n})+\sum_{i=1}^{k} M_{p,0}H_n(x,z_{i,n})=: \alpha_n(x)+\beta_n(x)+\gamma_n(x),
\end{equation}
where
\begin{equation}
    \label{I}
\alpha_n(x)=\sum_{i=1}^k\bigg(\int_{B_{2R_0\hat\epsilon_n}(z_{i,n})} G_n(x,y)\hat\mu_n^{\frac{N}{2}}[\hat v_n(y)-1]_+^p\,dy \,\,-M_{p,0} G_n(x,z_{i,n})\bigg),
\end{equation}
\begin{equation}
    \label{II}
\beta_n(x)=\sum_{i=k+1}^{l+\bar m}\int_{B_{2R_0\hat\epsilon_n}(z_{i,n})} G_n(x,y)\hat\mu_n^{\frac{N}{2}}[\hat v_n(y)-1]_+^p\,dy
\end{equation}
and
\begin{equation}
    \label{III}
\gamma_n(x)=\sum_{i=1}^{k} M_{p,0}H_n(x,z_{i,n}).
\end{equation}
First of all, we consider \eqref{III} and easily deduce from \eqref{Hndefinition} that
\[
\max_{x\in \bar{B}_{2R}}\abs{\gamma_n(x)}\to 0 \quad \text{as $n\to\infty$}.
\]
Now, we want to estimate \eqref{I}. We remind that $\{z_{i,n}\}_{i=1}^k$ are converging to spike points inside the ball of radius $2R$. For any $i\in\{1,\dots,k\}$, we see that
\begin{align*}
    \int_{B_{2R_0\hat\epsilon_n}(z_{i,n})} &G_n(x,y)\hat\mu_n^{\frac{N}{2}}[\hat v_n(y)-1]_+^p\,dy \,-M_{p,0} G_n(x,z_{i,n})\\
    &=\int_{B_{2R_0}(0)} G_n(x,\hat\epsilon_n z+z_{i,n})[\hat v_n(\hat\epsilon_n z+z_{i,n})-1]_+^p\,dz \,-M_{p,0} G_n(x,z_{i,n})\\
    &=:\int_{B_{2R_0}(0)} G_n(x,\hat\epsilon_n z+z_{i,n})[w_{i,n}(z)-1]_+^p\,dz \,-M_{p,0} G_n(x,z_{i,n}),
\end{align*}
where $w_{i,n}\to w_0$ in $C^{2}_{loc}$ as in definition \ref{spikessequencesdef} (iv). Here, we have used the fact that
\begin{equation*}
\hat{v}_n (\hat{\epsilon}_n z+z_{i,n})=v_n (x_{1,n}+\delta_n (\hat{\epsilon}_n 
z+z_{i,n}))=v_n(x_{i,n}+\epsilon_n 
 z)=w_{i,n}(z).
\end{equation*}
Thus, by using the definition of $M_{p,0}$ \eqref{eq:M_p,0-def}, we have that
\begin{align*}
    \bigg|\int_{B_{2R_0}(0)} &G_n(x,\hat\epsilon_n z+z_{i,n})[w_{i,n}(z)-1]_+^p\,dz \,-M_{p,0} G_n(x,z_{i,n})\bigg|\\
    &\leq \underbrace{\int_{B_{2R_0}(0)} \big|G_n(x,\hat\epsilon_n z+z_{i,n})-G_n(x,z_{i,n})\big|[w_0(z)-1]_+^p\,dz}_{(A_n)} +\\
    &\hspace{2cm}+\underbrace{\int_{B_{2R_0}(0)} G_n(x,\hat\epsilon_n z+z_{i,n})\big|[w_{i,n}-1]_+^p-[w_0(z)-1]_+^p\big|\,dz}_{(B_n)}.\\
\end{align*}
To estimate $(A_n)$, we remind that, being $x\in B_{2R}\backslash\widehat\Sigma_r$, then $|x-z_{i,n}|\geq\tfrac{r}{2}$ for $n$ large enough and for every $i\in\{1,\dots,k\}$.
Therefore, by recalling that $\abs{z}\leq 2R_0$ and using \eqref{Robinscaled}, we deduce that
\begin{align*}
    \Big|G_n(x,\hat\epsilon_n z&+z_{i,n})-G_n(x,z_{i,n})\Big|=\\
    &=\Big|\tfrac{C_N}{|x-z_{i,n}-\hat\epsilon_n z|^{N-2}}-\tfrac{C_N}{|x-z_{i,n}|^{N-2}}+ H_n(x,\hat\epsilon_n z+z_{i,n})-H_n(x,z_{i,n})\Big|\\
    &\leq\Big|\tfrac{C_N}{|x-z_{i,n}-\hat\epsilon_n z|^{N-2}}-\tfrac{C_N}{|x-z_{i,n}|^{N-2}}\Big|+2L\delta_n^{N-2} \\[0.5ex]
    &=C_N\Big|\tfrac{|x-z_{i,n}|^{N-2}-|x-z_{i,n}-\hat\epsilon_n z|^{N-2}}{|x-z_{i,n}-\hat\epsilon_n z|^{N-2}|x-z_{i,n}|^{N-2}}\Big|+2L\delta_n^{N-2}\\[0.5ex]
    &\leq C(r,R,N)(\hat\epsilon_n+\delta_n^{N-2}).
\end{align*}
In particular, $\abs{(A_n)}$ converges uniformly to $0$ as $n\to+\infty$.
Concerning $(B_n)$ we use \eqref{Robinscaled} to deduce that
\begin{align*}
(B_n)&=\int_{B_{2R_0}(0)} G_n(x,\hat\epsilon_n z+z_{i,n})\big|[w_{i,n}-1]_+^p-[w_0(z)-1]_+^p\big|\,dz \\
&\leq\int_{B_{2R_0}}(\tfrac{C}{r^{N-2}}+L\delta_n^{N-2})\big|[w_{i,n}-1]_+^p-[w_0(z)-1]_+^p\big|\,dz \\[0.5ex]
&\leq C(r,R,R_0,N)\|[w_{i,n}-1]_+^p-[w_0(z)-1]_+^p\|_{L^{\infty}(B_{2R_0})} \longrightarrow0 \quad \text{as $n\to\infty$}.
\end{align*}
Hence, this shows that 
$$|\alpha_n(x)|\to0, \,\,\text{as}\,\, n\to+\infty, \,\,\text{uniformly in $\bar{B}_{2R}\backslash \widehat{\Sigma}_{r}$}.$$
Now, we want to estimate \eqref{II}. Let $i\in\{k+1,\dots,l+\bar m\}$, then
\begin{align*}
    \int_{B_{2R_0\hat\epsilon_n}(z_{i,n})} G_n(x,y)\hat\mu_n^{\frac{N}{2}}[\hat v_n(y)-1]_+^p\,dy&=\int_{B_{2R_0}(0)} G_n(x,\hat\epsilon_n z+z_{i,n})[\hat v_n(\hat\epsilon_n z+z_{i,n})-1]_+^p\,dz\\
    &=\int_{B_{2R_0}(0)} G_n(x,\hat\epsilon_n z+z_{i,n})[w_{i,n}(z)-1]_+^p\,dz,
\end{align*}
where $w_{i,n}\to w_0$ in $C^2_{loc}$ as in definition \ref{spikessequencesdef} (iv). However, in this case, by using the fact that $|z_{i,n}|\to+\infty$, $|x|\leq 2R$, $|z|\leq2R_0$ and \eqref{Hndefinition}, we deduce that
\[
G_n(x,\hat\epsilon_n z+z_{i,n})=\frac{C_N}{|x-\hat\epsilon_n z-z_{i,n}|^{N-2}}+H_n(x,\hat\epsilon_n z+z_{i,n})\to 0,
\]
as $n\to+\infty$, uniformly in $x\in \bar B_{2R}$. Hence, 
\begin{align*}
    \sup_{x\in \overline{B}_{2R}} \bigg|\int_{B_{2R_0\hat\epsilon_n}(z_{i,n})} G_n(x,y)\hat\mu_n^{\frac{N}{2}}[\hat v_n(y)-1]_+^p\,dy\bigg|=o_n(1),
\end{align*}
for every $i\in\{k+1,\dots,l+\bar m\}$, therefore 
\[
\abs{\beta_n(x)}=o_n(1)\to0, \,\,\text{as}\,\, n\to+\infty,\,\, \text{uniformly in $\bar{B}_{2R}$}.
\]
At this point, from \eqref{expression}, \eqref{tripartition} and the above estimates, we are able to deduce that
\begin{equation*}
|R_n(x)|:=\left|\hat\epsilon_n^{N-2}\hat v_n(x)-\sum_{i=1}^k M_{p,0}\tfrac{C_N}{|x-z_{i,n}|^{N-2}}\right|\longrightarrow0,\,\,\text{as}\,\, n\to+\infty,
\end{equation*}
uniformly in $x\in\bar B_{2R}\backslash \widehat\Sigma_r$, which is our first claim. 

Now, let us consider the first derivatives of $R_n$. Namely, let us fix $j\in\{1,\dots,k\}$ and consider 
\begin{equation*}
\partial_{x_j}R_n=\partial_{x_j}\left(\hat\epsilon_n^{N-2}\hat v_n(x)-\sum_{i=1}^k M_{p,0}\tfrac{C_N}{|x-z_{i,n}|^{N-2}}\right).
\end{equation*}
By using \eqref{expression} and \eqref{tripartition}, we deduce that
\begin{align*}
\partial_{x_j}R_n&=\partial_{x_j}\left(\hat\epsilon_n^{N-2}\hat v_n(x)-\sum_{i=1}^k M_{p,0}\tfrac{C_N}{|x-z_{i,n}|^{N-2}}\right)\\
&=\partial_{x_j}\left(\int_{\O_n}G_n(x,y)\hat\mu_n^{\frac{N}{2}}[\hat v_n(y)-1]_+^p\,dy-\sum_{i=1}^k M_{p,0}G_n(x,z_{i,n})+\sum_{i=1}^{k} M_{p,0}H_n(x,z_{i,n}) \right)\\
&=\partial_{x_j}\alpha_n(x)+\partial_{x_j}\beta_n(x)+\partial_{x_j}\gamma_n(x). 
\end{align*}
\noindent Since $\mathrm{dist}(B_{2R_0\hat\epsilon_n }(z_{i,n}),\bar B_R\backslash\widehat\Sigma_r)\geq \frac{r}{2}$, for $n$ large enough,
we can differentiate under the integral sign and obtain that 
\begin{equation}
    \label{I2}
\partial_{x_j}\alpha_n(x)=\sum_{i=1}^k\bigg(\int_{B_{2R_0\hat\epsilon_n}(z_{i,n})} \partial_{x_j}G_n(x,y)\hat\mu_n^{\frac{N}{2}}[\hat v_n(y)-1]_+^p\,dy \,\,-M_{p,0} \partial_{x_j}G_n(x,z_{i,n})\bigg),
\end{equation}
\begin{equation*}
\partial_{x_j}\beta_n(x)=\sum_{i=k+1}^{l+\bar m}\int_{B_{2R_0\hat\epsilon_n}(z_{i,n})} \partial_{x_j}G_n(x,y)\hat\mu_n^{\frac{N}{2}}[\hat v_n(y)-1]_+^p\,dy
\end{equation*}
and
\begin{equation*}
\partial_{x_j}\gamma_n(x)=\sum_{i=1}^{k} M_{p,0}\partial_{x_j}H_n(x,z_{i,n}).
\end{equation*}
Now, we argue as for \eqref{I} in order to estimate \eqref{I2}. For any $i\in\{1,\dots,k\}$, we see that
\begin{align*}
    \int_{B_{2R_0\hat\epsilon_n}(z_{i,n})} &\partial_{x_j}G_n(x,y)\hat\mu_n^{\frac{N}{2}}[\hat v_n(y)-1]_+^p\,dy \,-M_{p,0} \partial_{x_j}G_n(x,z_{i,n})\\
    &=\int_{B_{2R_0}(0)} \partial_{x_j}G_n(x,\hat\epsilon_n z+z_{i,n})[\hat v_n(\hat\epsilon_n z+z_{i,n})-1]_+^p\,dz \,-M_{p,0} \partial_{x_j}G_n(x,z_{i,n})\\
    &=\int_{B_{2R_0}(0)} \partial_{x_j}G_n(x,\hat\epsilon_n z+z_{i,n})[w_{i,n}(z)-1]_+^p\,dz \,-M_{p,0} \partial_{x_j}G_n(x,z_{i,n}),
\end{align*}
where $w_{i,n}\to w_0$ in $C^{2}_{loc}$ as in definition \ref{spikessequencesdef} (iv).
Thus, by using the definition of $M_{p,0}$ \eqref{eq:M_p,0-def}, we have that
\begin{align*}
    \bigg|\int_{B_{2R_0}(0)} &\partial_{x_j}G_n(x,\hat\epsilon_n z+z_{i,n})[w_{i,n}(z)-1]_+^p\,dz \,-M_{p,0} \partial_{x_j}G_n(x,z_{i,n})\bigg|\\
    &\leq \underbrace{\int_{B_{2R_0}(0)} \big|\partial_{x_j}G_n(x,\hat\epsilon_n z+z_{i,n})-\partial_{x_j}G_n(x,z_{i,n})\big|[w_0(z)-1]_+^p\,dz}_{(A_n')} +\\
    &\hspace{2cm}+\underbrace{\int_{B_{2R_0}(0)} \partial_{x_j}G_n(x,\hat\epsilon_n z+z_{i,n})\big|[w_{i,n}-1]_+^p-[w_0(z)-1]_+^p\big|\,dz}_{(B_n')}.\\
\end{align*}
To estimate $(A_n')$, we remind that $|z|\leq 2R_0$ and $x\in B_{2R}\backslash\widehat\Sigma_r$. Then, for $n$ large enough, $|x-z_{i,n}|\geq\tfrac{r}{2}$, for every $i\in\{1,\dots,k\}$ and, by using \eqref{Robinscaledderivative}, we deduce that
\begin{align*}
    \big|\partial_{x_j}G_n(x,&\hat\epsilon_n z+z_{i,n})-\partial_{x_j}G_n(x,z_{i,n})\big|=\\[0.5ex]
    &=\big|\tfrac{C_N(2-N)(x_j-(z_{i,n})_j-\hat\epsilon_n z_j)}{|x-z_{i,n}-\hat\epsilon_n z|^{N}}-\tfrac{C_N(2-N)(x_j-(z_{i,n})_j)}{|x-z_{i,n}|^{N}}+ \partial_{x_j}H_n(x,\hat\epsilon_n z+z_{i,n})-\partial_{x_j}H_n(x,z_{i,n})\big|\\[0.5ex]
    &\leq\big|\tfrac{C_N(2-N)(x_j-(z_{i,n})_j-\hat\epsilon_n z_j)}{|x-z_{i,n}-\hat\epsilon_n z|^{N}}-\tfrac{C_N(2-N)(x_j-(z_{i,n})_j)}{|x-z_{i,n}|^{N}}\big|+2L\delta_n^{N-1} \\[0.5ex]
    &=C_N(N-2)\big|\tfrac{|x-z_{i,n}|^{N}(x-z_{i,n}-\hat\epsilon_n z)_j-|x-z_{i,n}-\hat\epsilon_n z|^{N}(x-z_{i,n})_j}{|x-z_{i,n}-\hat\epsilon_n z|^{N}|x-z_{i,n}|^{N}}\big|+2L\delta_n^{N-1}\\[0.5ex]
    &\leq C(r,R,N)(\hat\epsilon_n+\delta_n^{N-1}).
\end{align*}
In particular, $\abs{(A_n')}\to0$ as $n\to+\infty$ uniformly in $x\in \bar{B}_{2R}\backslash\widehat{\Sigma}_r$.
Concerning $(B_n')$, by using \eqref{Robinscaled}, we deduce that
\begin{align*}
(B_n')&=\int_{B_{2R_0}(0)} \partial_{x_j}G_n(x,\hat\epsilon_n z+z_{i,n})\big|[w_{i,n}-1]_+^p-[w_0(z)-1]_+^p\big|\,dz \\
&\leq\int_{B_{2R_0}}(\tfrac{C}{r^{N-1}}+C(r)\delta_n^{N-1})\big|[w_{i,n}-1]_+^p-[w_0(z)-1]_+^p\big|\,dz \\
&\leq C(r,R,R_0,N) \|[w_{i,n}-1]_+^p-[w_0(z)-1]_+^p\|_{L^{\infty}(B_{2R_0})}\longrightarrow0 \quad \text{as $n\to+\infty$}.
\end{align*}
Hence, this shows that 
$$|\partial_{x_j}\alpha_n(x)|\to0,\,\,\text{as}\,\, n\to+\infty,$$
uniformly in $x\in\bar B_{2R}\backslash \widehat\Sigma_r$. \\
Concerning the estimate on $\partial_{x_j}\beta_n(x)$, we can argue as in \eqref{II} and have that
$$|\partial_{x_j}\beta_n(x)|\to 0, \,\,\text{as}\,\, n\to+\infty,$$
uniformly in $x\in\bar B_{2R}\backslash \widehat\Sigma_r$. Moreover, regarding \eqref{III}, we use again \eqref{Robinscaledderivative} and deduce that
$$|\partial_{x_j}\gamma_n(x)|\to0, \,\,\text{as}\,\, n\to+\infty,$$
uniformly in $x\in\bar B_{2R}\backslash \widehat\Sigma_r$. As a consequence, we get that
\begin{equation*}
|\partial_{x_j}R_n(x)|:=\left|\partial_{x_j}\left(\hat\epsilon_n^{N-2}\hat v_n(x)-\sum_{i=1}^k M_{p,0}\tfrac{C_N}{|x-z_{i,n}|^{N-2}}\right)\right|\to 0\,\,\,\text{as}\,\, n\to+\infty,
\end{equation*}
uniformly in $x\in\bar B_{2R}\backslash \widehat\Sigma_r$, which concludes the proof of the lemma.
\end{proof}

\noindent

\subsection{Pohozaev identity for interior spike points}\label{subsec:simplepohozaev}

In this subsection we prove, by the means of a Pohozaev-type identity, that the spike points inside $B_{2R}$ are critical points of a suitable Hamiltonian. \\
Let us set $\hat u_n:=\hat\epsilon_n^{2-N}\hat v_n$ in $B_{2R}$; then
\begin{equation}
 \label{equationhatun}
 -\D \hat u_n=\hat\mu_n^{\frac{N}{2}}[\hat v_n-1]_+^p\,\,\,\,\text{in $B_{2R}$},
\end{equation}
 and, by Lemma \ref{lemmaasymptotic}, we deduce that
\begin{equation}
 \label{convergencehatun}
 \hat u_n(x)\to \hat G(x):=M_{p,0}\sum_{i=1}^k  \tfrac{C_N}{|x-z_{i}|^{N-2}},\,\,\,\,\text{in}\,\, C^0_{loc}(\bar B_{R}\backslash\widehat \Sigma_r)
\end{equation}
and
\begin{equation}
 \label{convergencederivativehatun}
 \nabla\hat u_n(x)\to  \nabla \hat G(x)=\sum_{i=1}^kM_{p,0} \tfrac{C_N(2-N)(x-z_{i})}{|x-z_{i}|^{N}},\,\,\,\,\text{in}\,\, C^0_{loc}(\bar B_{R}\backslash\widehat \Sigma_r).
\end{equation}
Now we test the above equation \eqref{equationhatun} against $\nabla\hat u_n$ and obtain the following vectorial Pohozaev-type identity: 
\begin{equation}
 \label{pohozaev}
 \int_{\p \hat\O}\left(-(\p_{\nu}\hat u_n)\nabla\hat u_n+\tfrac{1}{2}|\nabla\hat u_n|^2\nu\right)\,d\sigma=\int_{\p \hat\O}\tfrac{\hat\mu_n^{N-1}}{p+1}[\hat v_n-1]_+^{p+1}\nu\,d\sigma,
\end{equation}
for every open subset $\hat\O\Subset B_{R}$. 
At this point, we fix $j\in\{1, \dots, k\}$ and consider $z_j\in\widehat\Sigma$ and $\hat\O=B_r(z_j)$. Hence, by \eqref{convergencehatun} and \eqref{convergencederivativehatun}, we derive that the left-hand side of \eqref{pohozaev} satisfies, as $n\to+\infty$,
\[
 \int_{\p B_r(z_j)}\left(-(\p_{\nu}\hat u_n)\nabla\hat u_n+\tfrac{1}{2}|\nabla\hat u_n|^2\nu\right)\,d\sigma\to \int_{\p B_r(z_j)}\left(-(\p_{\nu}\hat G)\nabla\hat G+\tfrac{1}{2}|\nabla\hat G|^2\nu\right)\,d\sigma,
\]
while, reminding \eqref{vanishingoutsidespikes}, the right-hand side of \eqref{pohozaev} vanishes for $n$ large enough. 
Hence, we have the following identity:
\begin{equation*}
 \int_{\p B_r(z_j)}\left(-(\p_{\nu}\hat G)\nabla\hat G+\tfrac{1}{2}|\nabla\hat G|^2\nu\right)\,d\sigma=0.
\end{equation*}
Let $r(x)=|x-z_j|$; by definition of $\hat{G}$, one has
\begin{equation*}
 \hat G(x)=M_{p,0}C_N\Big(\frac{1}{r^{N-2}}+F_j(x)\Big),
\end{equation*}
where 
\begin{equation*}
F_j(x):=\sum_{i=1,i\neq j}^{k} \tfrac{1}{|x-z_i|^{N-2}}.
\end{equation*}
At this point, a straightforward calculation shows that
\begin{align*}
 -(\p_{\nu}\hat G)\nabla\hat G&+\tfrac{1}{2}|\nabla\hat G|^2\nu= \\
 &=M_{p,0}^2C_N^2\left[-\tfrac{1}{2}\tfrac{(2-N)^2}{r^{2N-2}}\nabla r+\tfrac{1}{2}|\nabla F_j|^2\nabla r+ \tfrac{N-2}{r^{N-1}} \nabla F_j-(\nabla F_j\cdot\nabla r)\nabla F_j\right],
\end{align*}
which implies 
\begin{equation}
 \label{limitpohozaev2}
\int_{\p B_r(z_j)} \left[-\tfrac{1}{2}\tfrac{(2-N)^2}{r^{2N-2}}\nabla r+\tfrac{1}{2}|\nabla F_j|^2\nabla r+ \tfrac{N-2}{r^{N-1}} \nabla F_j-(\nabla F_j\cdot\nabla r)\nabla F_j\right]\,d\sigma=0.
\end{equation}
By inspection, we conclude that the first integral is $0$ by symmetry, while all the components of the vectors
\[
 \int_{\p B_r(z_j)} \tfrac{1}{2}|\nabla F_j|^2\nabla r\,d\sigma\,\,\,\,\,\text{and}\,\,\, \int_{\p B_r(z_j)}(\nabla F_j\cdot\nabla r)\nabla F_j\,d\sigma
\]
are $O(r^{N-1})$ as $r\to0^+$ (notice that $F_j$ is uniformly bounded in $C^1(B_{2r})$ by definition). Finally,
\[
 \int_{\p B_r(z_j)} \tfrac{N-2}{r^{N-1}} \nabla F_j\,d\sigma=(N-2)\abs{\Sf^{N-1}}\fint_{\p B_r(z_j)} \nabla F_j\,d\sigma.
\]
As a consequence, by letting $r\to 0$ in \eqref{limitpohozaev2}, we deduce that
\begin{equation*}
 \label{finalequation}
 \nabla F_j(z_j)=(2-N)\sum_{i=1,i\not=j}^k\frac{z_j-z_i}{\abs{z_j-z_i}^{N}}=0.
\end{equation*}
The above calculations holds for every $j\in\{1,\dots,k\}$, therefore $z_1,\dots,z_k$ satisfy the following system:
\begin{equation}
 \label{Finalequation}
 \sum_{i=1,i\neq j}^{k} \tfrac{(z_i-z_j)}{|z_i-z_j|^N}=0 \qquad \text{for every $j\in\{1,\dots,k\}$}.
\end{equation}
Since by construction $z_1=0$ and $|z_2|=1$, then by Lemma \ref{lemmafinal}, we get a contradiction. This concludes the proof of Theorem \ref{Simplicitytheorem}.

\medskip

Here we prove the following technical lemma:
\begin{lemma}\label{lemmafinal}
  Let $k\geq 2$ and consider $\{z_i\}_{i=1}^k\subset\R^N$ such that $z_i\neq z_j$, for every $i\neq j$. Then system \eqref{Finalequation} has no solution.

\end{lemma}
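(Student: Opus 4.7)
The plan is to recognize the system \eqref{Finalequation} as the critical-point condition for a standard potential and then derive a contradiction by a scaling/homogeneity argument. Specifically, on the open configuration space of pairwise distinct $k$-tuples, consider
\[
F(z_1,\ldots,z_k) := \sum_{1\le i<j\le k}\frac{1}{|z_i-z_j|^{N-2}}.
\]
A direct differentiation gives
\[
\nabla_{z_j} F(z) = (N-2)\sum_{i\neq j}\frac{z_i-z_j}{|z_i-z_j|^N},
\]
so \eqref{Finalequation} is equivalent to saying that $z=(z_1,\dots,z_k)$ is a critical point of $F$.

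Once this reformulation is in place, the key observation is that $F$ is positively homogeneous of degree $-(N-2)$: indeed $F(\lambda z)=\lambda^{-(N-2)}F(z)$ for every $\lambda>0$. Euler's identity therefore yields, at every point of the configuration space,
\[
\sum_{j=1}^k z_j\cdot\nabla_{z_j}F(z) = -(N-2)\,F(z).
\]
If a critical configuration $z^{\star}$ existed, the left-hand side would vanish there, forcing $F(z^{\star})=0$; but $F>0$ on the configuration space of distinct points and $N-2\ge 1$, a contradiction. Note that the argument is insensitive to the choice of origin, because translation invariance of $F$ gives $\sum_j \nabla_{z_j}F\equiv 0$, so replacing each $z_j$ with $z_j-c$ does not affect the Euler identity.

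I expect no genuine obstacle here: the two items to check are the gradient formula identifying \eqref{Finalequation} with $\nabla F=0$ and the strict positivity $F(z^{\star})>0$, both immediate. As a more pedestrian alternative, one can avoid potentials entirely: pick $j_{0}$ with $|z_{j_{0}}|$ maximal (strictly positive, since at most one $z_i$ can equal $0$), pair the $j_{0}$-th equation with $z_{j_{0}}$, and observe via Cauchy--Schwarz that $(z_i-z_{j_{0}})\cdot z_{j_{0}}\le 0$ for every $i\neq j_{0}$, with strict inequality (else $z_i=z_{j_{0}}$). This forces the summed inner product to be strictly negative, contradicting \eqref{Finalequation}.
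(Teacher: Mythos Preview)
Your argument is correct. Both routes you propose are valid: the gradient computation identifying \eqref{Finalequation} with $\nabla F=0$ is right, and the Euler identity for the $-(N-2)$-homogeneous potential $F$ immediately forces $F=0$ at any critical configuration, which is impossible since $F>0$ and $N\ge3$. Your alternative via the point of maximal norm is also fine: the Cauchy--Schwarz step $(z_i-z_{j_0})\cdot z_{j_0}<0$ for $i\neq j_0$ is exactly what is needed.

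The paper's proof is different and more elementary. It works coordinate by coordinate: pick the index $i_1$ maximizing the first component $(z_i)_1$; if any other point has strictly smaller first component, the first component of the $i_1$-th equation is strictly negative, a contradiction. Hence all first components coincide, and iterating over the remaining coordinates forces $z_1=\dots=z_k$, contradicting distinctness. Your alternative is close in spirit (an extremal-point argument), but uses the radial direction rather than a coordinate axis; your main argument via the potential and Euler's relation is genuinely different and more conceptual, dispatching all equations at once without any extremal selection. The paper's approach has the minor advantage of being completely self-contained and not invoking homogeneity or the interpretation as a gradient system; your approach has the advantage of explaining \emph{why} the system has no solution (it would be a critical point of a strictly positive, strictly decreasing-under-dilation function).
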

\begin{proof}
 The proof for $k=2$ is trivial. \\
 Let $k\geq 3$. Let us assume \eqref{Finalequation} hold and let $z_{i_1}\in\{z_1,\dots,z_k\}$ be such that
 \[
  (z_{i_1})_1=\max_{i\in\{1,\dots,k\}} (z_i)_1,
 \]
where $(v)_1$ is the first component of the vector $v\in\R^N$. \\
If there exists $\hat z\in\{z_1,\dots,z_k\}$ such that $(\hat z)_1<(z_{i_1})_1$, then 
\[
 \bigg(\sum_{i=1,i\neq i_1}^{k} \frac{(z_i-z_{i_1})}{|z_i-z_{i_1}|^N}\bigg)_1<0.
\]
In particular, this implies that
\[
  \sum_{i=1,i\neq i_1}^{k} \frac{(z_i-z_j)}{|z_i-z_j|^N}\neq0,
 \]
contradicting \eqref{Finalequation}. Hence, we must necessarily consider all the first components of $\{z_1,\dots, z_k\}$
equal, namely $$(z_1)_1=\dots=(z_k)_1.$$ On the other hand, we can repeat the same argument for all the other components and deduce that necessarily
\[
 z_1=\dots=z_k,
\]
which is a contradiction.
\end{proof}

\section{Nonexistence of boundary spike points}\label{sec:Boundaryspikes}

Let us prove Theorem \ref{Boundaryspikestheorem}. As for the case of interior spikes, $v_n$ is a spike sequence, as a consequence of Theorem \ref{BJWtheorem}. Again, we argue by contradiction and assume that there exists at least a boundary spike point $\tilde z\in\partial \O\cap\Sigma$. Then, we suppose there exist $x_{1,n},\dots,x_{l,n}$, with $l\geq1$, sequences of local maxima corresponding to $l$ spike profiles clustering at $\tilde z$.\\
Let us define, without loss of generality, 
\[
d_n:=\underset{i\in\{1,\dots,l\}}\min \mathrm{dist}(x_{i,n},\partial\O)=\mathrm{dist}(x_{1,n},\partial\O)=|x_{1,n}-\tilde z_n|.
\]
where $\tilde z_n\in\partial \O$, $\tilde z_n\to\tilde z$, as $n\to+\infty$. Let us define 
\begin{align*}
\tilde v_n(x):=v_n(\tilde z_n+d_n x),
\end{align*}
and let $\tilde\mu_n:=d_n^2\mu_n$. Then $\tilde v_n$ satisfies
\begin{equation*}
    \begin{cases}
    -\D \tilde v_n=\tilde\mu_n[\tilde v_n-1]_+^p\,\, &\text{in}\, \O_n, \\[0.5ex]
    \tilde v_n=0 \,\, &\text{on}\, \p\O_n,  
\end{cases}
\end{equation*}
where $\O_n=d_n^{-1}(\O-\tilde z_n)$. Without loss of generality, up to a traslation and a rotation, we can assume that, as $n\to\infty$, $\O_n$ converges in $C^2_{loc}$ to $\R_-^N:=\{x=(x',x_N)\in\R^N\mid x_N<0\}$. 

Also, by reminding that $\epsilon_n^2=\mu_n^{-1}$ and the fact that $v_n$ is a spike sequence according to Theorem \ref{BJWtheorem}, we know that $(iii)$ of Definition \ref{spikessequencesdef} holds. Hence, by the very definition of spike set (see Definition \ref{def:spikeset}), we deduce that $\tilde {\mu}_n\to+\infty$. Moreover, assumption \eqref{HypothesisAintroduction} implies by a change of variable that
\[
\tilde {\mu}_n^{\frac{N}{2}}\int_{\O_n}[\tilde v_n-1]_+^p\,dx\leq C_0.
\]
We notice also that, by denoting with 
\[
z_{i,n}:=d_n^{-1}(x_{i,n}-\tilde z_n), \,\,\,\,\,\text{for}\,\,i\in\{1,\dots,l\},
\]
In particular, by construction, $z_{1,n}\to z_1=(0,\dots,0,-1)$ as $n\to\infty$. Moreover, we can assume that $z_{1,n},\dots,z_{k,n}$, with $k\leq l$, are the only spike maxima satisfying
\[
\limsup_n \mathrm{dist}(z_{i,n},(0,\dots,0))<+\infty,\,\,\,\,\, \forall i\in\{1,\dots,l\}.
\]
Without loss of generality, let us define 
\[
\delta_n:=\underset{{i,j\in\{1,\dots,k\}}, i\neq j}{\min}|z_{i,n}-z_{j,n}|=|z_{1,n}-z_{2,n}|.
\]
Then, we are required to study two different cases: 
\begin{itemize}
    \item (Case 1)  $\delta_n\to0^+$ as $n\to+\infty$;
    \item (Case 2)   $\delta_n\geq c>0$ $\forall n\in \N$.
\end{itemize}
Concerning Case 1, we can define $\hat v_n$ as
\[
\hat v_n(x):=\tilde v_n(z_{1,n}+\delta_nx) 
\]
for $x\in\hat\O_n:=\delta_n^{-1}(\O-z_{1,n})$. Here, we are in a situation similar to the one discussed before about the interior spikes points, so the contradiction follows from a step by step adaptation of the arguments employed in Section \ref{sec:interiorspikes}.\\
Hence, we are reduced to consider Case 2. If $\delta_n\to+\infty$ as $n\to+\infty$, then $k=1$, while if $\delta_n\to L\in(0,+\infty)$ as $n\to+\infty$, we deduce $k>1$, which means that $\exists R>0$ such that $\{z_{i,n}\}_{i=1}^k\subset B_R(0)\cap \O_n$ for $n$ large enough.   \\
Moreover, for any $i\in\{1,\dots, k\}$, we assume that there exist $z_i\in B_R^-(0):=\{x\in B_R(0) \,| \,x_N<0\}$ such that, up to a subsequence, $z_{i,n}\to z_i$, as $n\to+\infty$. We also remark that, by construction, $(z_i)_N\leq-1$ for every $i\in\{1,\dots,k\}$. \\
Let us define $\widetilde \Sigma:=\{z_1,\dots, z_k\}$ and $\widetilde \Sigma_r:=\bigcup_{i=1}^k B_r(z_i),$  for $r>0$ small. As for the case of interior spikes, we now proceed in two steps:
    \begin{itemize}
        \item [(1)] First, we derive pointwise estimates for $\tilde v_n$ and its derivatives in $(B_R\cap {\Omega}_n)\backslash\widetilde{\Sigma}_r$;
        \item [(2)] Secondly, we use a Pohozaev-type identity to get a contradiction.
    \end{itemize}
\medskip

\subsection{Pointwise estimates for $\tilde{v}_n$}

Let us consider $G_n(x,y):=G_{\O_n}(x,y)$, the Green's function of the Laplacian in $\O_n$. It is straightforward to verify that 
$$
G_n(x,y)=d_n^{N-2} G_{\O}(\tilde z_n+d_n x,\tilde z_n+d_n y)= \tfrac{C_N}{|x-y|^{N-2}}+ H_n(x,y),
$$
where $G_\O$ is the Green's function in $\O$ and
\begin{equation}
    \label{Hndefinition2}
H_n(x,y):=d_n^{N-2} H_{\O}(\tilde z_n+d_n x,\tilde z_n+d_n y)
\end{equation}
is the regular part of the Green function in $\O_n$.\\
Let $U_R:=\bar B_{2R}(0)\cap\{y\mid y_N<-\tfrac{1}{2}\}$. From what we said, $U_R\Subset \Omega_n$ for $n$ large enough.
We now claim the following important property for $H_n$:
\begin{lemma}
    \label{Robinbounded}
   $H_n(x,y)$ is uniformly bounded in $C^2(\O_n\times U_R)$, provided $n$ large enough.
\end{lemma}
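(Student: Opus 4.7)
The plan is to reduce the required $C^2$ bound to a weighted pointwise estimate on the original regular part $H_\Omega$, exploiting the explicit scaling
\[
H_n(x,y) = d_n^{N-2}\, H_\Omega(\tilde z_n + d_n x,\, \tilde z_n + d_n y).
\]
First I would establish, for any multiindices with $|\alpha|+|\beta|\leq 2$, the weighted bound
\begin{equation*}
\bigl|\partial_\xi^\alpha\, \partial_\eta^\beta H_\Omega(\xi,\eta)\bigr|\leq \frac{C}{\mathrm{dist}(\eta,\partial\Omega)^{N-2+|\alpha|+|\beta|}},\qquad \forall\,\xi\in\bar\Omega,\ \eta\in\Omega.
\end{equation*}
The argument is standard: for fixed $\eta$, the map $\xi\mapsto H_\Omega(\xi,\eta)$ is harmonic in $\Omega$ with boundary data $-C_N/|\xi-\eta|^{N-2}$. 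Since $|\xi-\eta|\geq \mathrm{dist}(\eta,\partial\Omega)$ for $\xi\in\partial\Omega$, the $C^{2,\alpha}$-norm of this boundary data is $O(\mathrm{dist}(\eta,\partial\Omega)^{-N})$, and global boundary Schauder estimates on the smooth domain $\Omega$ give the $|\beta|=0$ case. Differentiating in $\eta$ first (which commutes with the Laplacian in $\xi$) produces a harmonic function in $\xi$ whose boundary data is $\partial_\eta^\beta(-C_N/|\xi-\eta|^{N-2})$, carrying one extra factor of $\mathrm{dist}(\eta,\partial\Omega)^{-1}$ per $\eta$-derivative; the same Schauder argument then covers $|\beta|\geq 1$.

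Next I would verify that for every $y\in U_R$ and every $n$ large enough,
\begin{equation*}
\mathrm{dist}(\tilde z_n+d_n y,\partial\Omega)\geq c\,d_n,
\end{equation*}
for some $c>0$ independent of $n$. This is where the $C^2$-smoothness of $\partial\Omega$ at $\tilde z$ enters: after the translation and rotation used in defining $\tilde v_n$, the rescaled boundaries $\partial\Omega_n$ converge to $\{x_N=0\}$ in $C^2_{\mathrm{loc}}$, so a point of $U_R$ (which has $y_N<-\tfrac12$) lies at rescaled depth at least $\tfrac12-O(d_n)\geq\tfrac14$ from $\partial\Omega_n$, i.e., at true depth at least $\tfrac14\,d_n$ from $\partial\Omega$.

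Combining the two ingredients and applying the chain rule to $H_n$ gives, for $|\alpha|+|\beta|\leq 2$,
\[
\bigl|\partial_x^\alpha\partial_y^\beta H_n(x,y)\bigr|=d_n^{N-2+|\alpha|+|\beta|}\bigl|\partial_\xi^\alpha\partial_\eta^\beta H_\Omega(\tilde z_n+d_n x,\tilde z_n+d_n y)\bigr|\leq d_n^{N-2+|\alpha|+|\beta|}\cdot\frac{C}{d_n^{N-2+|\alpha|+|\beta|}}=C,
\]
which is the desired uniform $C^2$ estimate on $\Omega_n\times U_R$. The main delicate step is the weighted estimate in the first paragraph; everything else is essentially bookkeeping of powers of $d_n$. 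The key subtlety is that the Schauder constants must depend only on $\Omega$ (and not on $\eta$), so that all the $\eta$-dependence is carried by the explicit blow-up of $-C_N/|\xi-\eta|^{N-2}$ and its derivatives as $\eta\to\partial\Omega$; this is standard, but needs to be checked to obtain the precise power $N-2+|\alpha|+|\beta|$ that matches the scaling of $H_n$.
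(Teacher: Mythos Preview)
Your overall strategy---prove a weighted pointwise bound on derivatives of $H_\Omega$ with the exact exponent $N-2+|\alpha|+|\beta|$, verify $\mathrm{dist}(\tilde z_n+d_ny,\partial\Omega)\geq c\,d_n$ for $y\in U_R$, then scale---is exactly the paper's. The paper isolates the weighted bound as a separate lemma (Lemma~\ref{Hproperty}) and then carries out the same bookkeeping of powers of $d_n$ that you describe.

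The one substantive difference is the tool used for the weighted bound. The paper does \emph{not} use boundary Schauder; instead it uses only the maximum principle: for fixed $x$, the function $y\mapsto\partial_x^\alpha H_\Omega(x,y)$ is harmonic in $\Omega$, with boundary values $\partial_x^\alpha\bigl(-C_N|x-y|^{2-N}\bigr)$ explicitly bounded by $C/d_\Omega(x)^{N-2+|\alpha|}$ on $\partial\Omega$, and the maximum principle immediately yields the same bound in the interior. This gives the precise graded exponent $N-2+|\alpha|$ with no loss. By contrast, your Schauder route as written does not: a single $C^{2,\alpha}$ estimate controls \emph{all} $\xi$-derivatives up to order~$2$ by the same power $\|g\|_{C^{2,\alpha}(\partial\Omega)}\sim d_\Omega(\eta)^{-(N+\alpha)}$, which is too large for $|\alpha|\leq 1$ and would make the scaling in your final display blow up like $d_n^{-\alpha}$ or worse. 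You correctly flag in your last paragraph that obtaining the exact power ``needs to be checked'', but the sketch you give (one global $C^{2,\alpha}$ Schauder estimate) does not achieve it. The maximum-principle argument is both simpler and sharp; alternatively, one could run your Schauder idea order by order (apply $C^{k,\alpha}$ Schauder separately for $k=0,1,2$), but even that leaves a spurious H\"older exponent. So the paper's route is the cleaner fix for the subtlety you identified.
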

Before proving this result we need the following lemma:
\begin{lemma}
    \label{Hproperty}
Let $d_\O(x):=\mathrm{dist}(x,\partial \O)$. Then, there exists $C=C(\O)>0$ such that, for every $x,y\in\O$,
    \begin{equation*}
        |H_\O(x,y)|\leq\tfrac{C}{d_\O(x)^{N-2}},
    \end{equation*}
    \begin{equation*}
|\nabla_xH_\O(x,y)|\leq\tfrac{C}{d_\O(x)^{N-1}},
    \end{equation*}
    \begin{equation*}
|\nabla_x^2H_\O(x,y)|\leq\tfrac{C}{d_\O(x)^{N}}.
    \end{equation*}
\end{lemma}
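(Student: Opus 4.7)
The plan is to exploit the fact that $x \mapsto H_\Omega(x,y)$ is harmonic in $\Omega$ with explicit boundary values, then combine the maximum principle with the symmetry $H_\Omega(x,y)=H_\Omega(y,x)$ and standard interior derivative estimates for harmonic functions.

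First, I would observe that for fixed $y \in \Omega$ the function $H_\Omega(\cdot, y)$ is harmonic in $\Omega$. Indeed, both $G_\Omega(\cdot, y)$ and the fundamental solution $C_N/|\cdot - y|^{N-2}$ solve $-\Delta u = \delta_y$ in $\Omega$, so their difference is harmonic (the singularity at $y$ cancels). Since $G_\Omega$ vanishes on $\partial\Omega$, we have $H_\Omega(x,y) = -C_N/|x-y|^{N-2}$ for $x \in \partial\Omega$. The maximum principle and the trivial bound $|x'-y|\geq d_\Omega(y)$ for $x' \in \partial\Omega$ then yield
\[
|H_\Omega(x,y)| \leq \max_{x' \in \partial\Omega} \frac{C_N}{|x'-y|^{N-2}} \leq \frac{C_N}{d_\Omega(y)^{N-2}}.
\]
Invoking the symmetry of the Green function (and hence of $H_\Omega$) and applying the same argument with $x$ and $y$ interchanged gives the desired pointwise estimate $|H_\Omega(x,y)| \leq C_N/d_\Omega(x)^{N-2}$.

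For the derivative bounds, I would fix $x \in \Omega$, set $r := d_\Omega(x)/2$, and note that $B_r(x) \Subset \Omega$ with $d_\Omega(x') \geq d_\Omega(x)/2$ for every $x' \in B_r(x)$. Since $H_\Omega(\cdot, y)$ is harmonic on $B_r(x)$, the classical Cauchy-type estimates for harmonic functions, combined with the $L^\infty$-bound from the previous step applied at each $x' \in B_r(x)$, give
\[
|\nabla_x^k H_\Omega(x,y)| \leq \frac{C_k}{r^k} \sup_{B_r(x)} |H_\Omega(\cdot,y)| \leq \frac{C_k'}{d_\Omega(x)^{N-2+k}}, \qquad k = 1, 2,
\]
which produces the last two estimates of the lemma.

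There is no real obstacle: the only point worth flagging is that the maximum principle alone produces a bound in terms of $d_\Omega(y)$, and the symmetry of $H_\Omega$ is essential to convert it into the desired bound in terms of $d_\Omega(x)$; everything else reduces to the standard interior regularity theory of harmonic functions.
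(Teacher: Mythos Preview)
Your proof is correct. For the $C^0$ bound, your argument is essentially the same as the paper's, with one cosmetic difference: the paper fixes $x$ and applies the maximum principle in the $y$ variable (using that $H_\Omega(x,\cdot)$ is also harmonic, with boundary values $-C_N/|x-y|^{N-2}$ bounded directly by $C_N/d_\Omega(x)^{N-2}$), thereby obtaining the bound in terms of $d_\Omega(x)$ in one step without an explicit appeal to the symmetry of $H_\Omega$. For the derivative estimates, the paper simply says ``the other inequalities are proved by arguing in the same way'' --- i.e., one applies the maximum principle in $y$ to the harmonic functions $\partial_{x_i}H_\Omega(x,\cdot)$ and $\partial^2_{x_ix_j}H_\Omega(x,\cdot)$, whose boundary values are the explicit derivatives of $-C_N/|x-\cdot|^{N-2}$ and are bounded by $C/d_\Omega(x)^{N-1}$ and $C/d_\Omega(x)^{N}$ respectively. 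Your route via interior Cauchy estimates on $B_{d_\Omega(x)/2}(x)$ is a genuinely different (and equally standard) mechanism: it avoids computing the boundary values of the derivatives at the cost of invoking interior regularity for harmonic functions. Either approach is a one-line application of classical potential theory, so neither offers a real advantage over the other.
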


\begin{proof}
We fix $x\in\O$ and notice that, by definition of $H_\O$, we have that, for every $y\in\partial \O$,
\[
|H_\O(x,y)|=\tfrac{C_N}{|x-y|^{N-2}}\leq \tfrac{C_N}{d_\O(x)^{N-2}}.
\]
Then, by applying the maximum principle, we get that
\[
|H_\O(x,y)|\leq \tfrac{C_N}{d_\O(x)^{N-2}}
\]
for every $y\in\O$. The other inequalities are proved by arguing in the same way.
\end{proof}

\begin{proof}[Proof of Lemma \ref{Robinbounded}]
By definition,
\[
H_n(x,y)=d_n^{N-2} H_{\O}(\tilde z_n+d_n x,\tilde z_n+d_n y),
\]
\[
\partial_{x_i}H_n(x,y)=d_n^{N-1} \partial_{x_i}H_{\O}(\tilde z_n+d_n x,\tilde z_n+d_n y),
\]
\[
\partial^2_{x_i,x_j}H_n(x,y)=d_n^{N} \partial^2_{x_i,x_j}H_{\O}(\tilde z_n+d_n x,\tilde z_n+d_n y).
\]
Hence, by using Lemma \ref{Hproperty} and the symmetry of $H_n$ in $x,y$, we deduce that
\[
|H_n(x,y)|\leq d_n^{N-2}\tfrac{C}{d_\O(z_n+d_ny)^{N-2}}.
\]
Now, for $n$ large enough, since $y_n<-\tfrac{1}{2}$ and $\partial \O_n$ is approaching $\{y_n=0\}$ near $0$, we can assume without loss of generality that $\mathrm{dist}(y,\partial \O_n)>\tfrac{1}{4}$. Then, this in turn reads as $d_\O(z_n+d_ny)>\tfrac{d_n}{4}$, so from above we have that
\[
|H_n(x,y)|\leq C.
\]
Similarly, by using Lemma \ref{Hproperty} as above, we obtain that
\[
|\partial_{x_i}H_n(x,y)|\leq d_n^{N-1}\tfrac{C}{d_\O(z_n+d_ny)^{N-1}}\leq C
\]
and
\[
|\partial^2_{x_i,x_j}H_n(x,y)|\leq d_n^{N}\tfrac{C}{d_\O(z_n+d_ny)^{N}}\leq C.
\]
This concludes the proof.    
\end{proof}

The next lemma shows that, for $n$ large, $H_n(x,y)$ is arbitrarily $C^0$-close to the regular part of the Green's function for the half-plane $\R^N_-$, that is,
\begin{equation*}
    H_{-}(x,y):=-\frac{C_N}{|x-\tilde y|^{N-2}},
\end{equation*}
where $\tilde y=(y_1,\dots,y_{N-1},-y_N)$.
\begin{lemma}\label{approximateRobinlemma}
Let $U_R:=\bar B_{2R}(0)\cap\{y\mid y_N<-\tfrac{1}{2}\}$; then
\begin{equation*}
    \|H_--H_n\|_{C^0(\O_n\times U_R)}=o_n(1),\,\,\,\,\text{as}\,\,n\to+\infty.
\end{equation*}
\end{lemma}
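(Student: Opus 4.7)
The key observation is that, for each fixed $y \in U_R$, the function $h_n(\cdot,y) := H_n(\cdot,y) - H_-(\cdot,y)$ is harmonic on the bounded domain $\O_n$. Indeed, $H_n(\cdot,y)$ is harmonic on $\O_n$ by definition of the regular part of $G_n$, and the singularity of $H_-(x,y) = -C_N|x-\tilde y|^{2-N}$ is located at $\tilde y = (y',-y_N)$; since $y \in U_R$ forces $\tilde y_N > \tfrac12$, the local $C^2$ convergence $\O_n \to \R^N_-$ yields $\tilde y \notin \overline{\O_n}$ for all $n$ large, so $H_-(\cdot,y)$ is smooth up to $\p\O_n$. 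The maximum principle therefore reduces the claim to showing
\[
\sup_{y \in U_R}\,\sup_{x \in \p\O_n}|h_n(x,y)| = o_n(1).
\]

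On the boundary $G_n(x,y)=0$, hence $H_n(x,y) = -C_N|x-y|^{2-N}$, and consequently
\[
h_n(x,y) = C_N\left(\frac{1}{|x-\tilde y|^{N-2}} - \frac{1}{|x-y|^{N-2}}\right), \qquad x \in \p\O_n, \ y \in U_R.
\]
I would then fix a large constant $R_0 > 2R$ and split $\p\O_n$ into the ``far'' part $\p\O_n \setminus B_{R_0}$ and the ``near'' part $\p\O_n \cap B_{R_0}$. On the far part, both $|x-y|$ and $|x-\tilde y|$ are at least $R_0 - 2R$, so $|h_n(x,y)| \leq 2C_N(R_0-2R)^{2-N}$, which is arbitrarily small uniformly in $n$ by taking $R_0$ large.

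On the near part, since $\p\O$ is smooth and $\tilde z_n \to \tilde z \in \p\O$, after the rotation fixed earlier $\p\O_n \cap B_{R_0}$ is the graph $x_N = \phi_n(x')$ of a function with $\|\phi_n\|_{C^2} \to 0$ as $n \to +\infty$. Writing $\tilde y = (y',-y_N)$, the elementary identity
\[
|x-y|^2 - |x-\tilde y|^2 = -4\phi_n(x')\,y_N
\]
shows that $|x-y|$ and $|x-\tilde y|$ differ by an $O(\phi_n)$ quantity, while both remain bounded below by, say, $\tfrac14$ (since $|y_N| > \tfrac12$ and $\phi_n \to 0$ uniformly). By the mean value theorem one then obtains $|h_n(x,y)| \leq C(N,R)\|\phi_n\|_{L^\infty} \to 0$, uniformly in $y \in U_R$. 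Choosing first $R_0$ large and then $n$ large finishes the estimate.

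The only non-routine point I foresee is handling the graph representation $x_N = \phi_n(x')$ uniformly, since the localizing chart near $\tilde z_n$ depends on $n$; however, the smoothness of $\p\O$ together with $\tilde z_n \to \tilde z$ and the rescaling by $d_n \to 0$ makes this standard. The restriction $y_N < -\tfrac12$ built into $U_R$ automatically provides the needed uniform separation of $y$ and $\tilde y$ from $\p\O_n$, so no additional care is needed to retain uniformity in $y$ throughout the argument.
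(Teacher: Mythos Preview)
Your argument is correct and essentially identical to the paper's: both reduce via the maximum principle to a boundary estimate for $C_N\bigl(|x-\tilde y|^{2-N}-|x-y|^{2-N}\bigr)$ on $\partial\O_n$, then split the boundary into a near part (where flatness of $\partial\O_n$ makes $|x-y|\approx|x-\tilde y|$) and a far part (where both distances are large). The only cosmetic differences are that the paper parametrizes the near/far split by a single small $\eta$ (with cutoff radius $1/\eta$ and flatness tolerance $\eta$) rather than your $R_0$ and $\|\phi_n\|_{L^\infty}$, and that you explicitly verify $\tilde y\notin\overline{\O_n}$, which the paper leaves implicit.
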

\begin{proof}
By definition, $H_\O(\cdot,y)|_{\partial \O}=-\frac{C_N}{|\cdot- y|^{N-2}}$,  
and, by \eqref{Hndefinition2},
\[
H_n(x,y)=-\frac{C_N}{|x- y|^{N-2}}\,\,\,\,\,\, \forall x\in\partial \O_n, \, \forall y\in \Omega_n.
\]
Fix $\eta>0$ such that $\frac{1}{\eta}>2R$; then there exists $n(\eta)$ such that one has $(\partial \O_n\cap B_{\frac{1}{\eta}}(0))\subset\{x\,|\, |x_N|<\eta\}$ for any $n\geq n(\eta)$.
Now, if $x\in\partial \O_n\cap B_{\frac{1}{\eta}}$ and $y\in U_R$, then
\begin{align*}
    |H_-(x,y)-H_n(x,y)|&=C_N\left|\tfrac{1}{|x-y|^{N-2}}-\tfrac{1}{|x-\tilde y|^{N-2}}\right|\leq C(N,R)\eta
\end{align*}
for $n\geq n(\eta)$, where we used the fact that $|x-y|\geq\frac{1}{4}$.\\
On the other hand, if $x\in\partial \O_n\cap (B_{\frac{1}{\eta}})^c$ and $y\in U_R$,  then $\mathrm{dist}(y,\O_n\cap (B_{\frac{1}{\eta}})^c)>\tfrac{1}{2\eta}$ and $\mathrm{dist}(\tilde y,\O_n\cap (B_{\frac{1}{\eta}})^c)>\tfrac{1}{2\eta}$. This implies that
\begin{align*}
    |H_-(x,y)-H_n(x,y)|&=C_N\left|\tfrac{1}{|x-y|^{N-2}}-\tfrac{1}{|x-\tilde y|^{N-2}}\right|\leq 2C_N(2\eta)^{N-2}.
\end{align*}
Since $\eta>0$ is arbitrary, we conclude that
\[
\|H_-(x,y)-H_n(x,y)\|_{C^0(\partial \O_n\times U_R)}=o_n(1),\,\,\,\,\,\,\text{as}\,\,n\to+\infty.
\]
Let us fix $y\in U_R$ and consider $\Theta_n^y(x):=H_-(x,y)-H_n(x,y)$; then $\Theta_n^y$ is harmonic in $\Omega_n$ and $\sup_{x\in \p\O_n}\abs{\Theta_n^y(x)}=o_n(1)$, as $n\to+\infty$, uniformly in $y\in U_R$. Then, by the maximum principle, we get    
that $\sup_{y\in U_R}\| \Theta_n^y\|_{L^\infty(\O_n)}=o_n(1)$, as $n\to+\infty$; this concludes the proof.
\end{proof}

Combining the results of Lemma \ref{Robinbounded} and Lemma \ref{approximateRobinlemma}, by Ascoli-Arzelà theorem, we have that
\begin{equation}
\label{C1convergence}
\|H_n-H_-\|_{C^1(U_R\times U_R)}=o_n(1),\,\,\,\,\,\text{as}\,\,n\to+\infty.
\end{equation}
 Now, we are ready to derive the expansion of $\tilde v_n$ in $U_R\backslash\widetilde{\Sigma}_r$. We remind that $\{z_i\}_{i=1}^l\subset B_{2R}\cap\{x_N\leq-1\}$, so $\widetilde{\Sigma}_r\Subset U_R$ for $r$ small.

\begin{lemma} \label{approximationlemmaboundary}
Let $\tilde\epsilon_n^2:=\tilde\mu_n^{-1}$ and $x\in U_R\backslash \widetilde\Sigma_r$. Then
\begin{equation*}
    \tilde\epsilon_n^{2-N}\tilde v_n(x)=\sum_{i=1}^kM_{p,0} G_-(x,z_{i,n})+R_n(x),
\end{equation*}
where $\|R_n\|_{C^1(U_R\backslash \widetilde \Sigma_r)}=o_n(1)$, as $n\to+\infty$, and $G_-$ is the Green's function of the Laplacian in the half-space $\{x\in\R^N\,|\, x_N<0\}$, namely
\begin{equation*}
    G_-(x,y)=\frac{C_N}{|x-y|^{N-2}}-\frac{C_N}{|x-\tilde y|^{N-2}},
\end{equation*}
with $\tilde y:=(y_1,\dots,y_{N-1},-y_N)$.  
\end{lemma}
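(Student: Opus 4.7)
The plan is to mirror the proof of Lemma \ref{lemmaasymptotic}, with two substitutions: the bulk fundamental solution $\tfrac{C_N}{|x-z_{i,n}|^{N-2}}$ is replaced by the half-space Green's function $G_-(x,z_{i,n})$, and the pointwise decay (\ref{Robinscaled}) of $H_n$ is replaced by the $C^1$ convergence $H_n\to H_-$ on $U_R\times U_R$ supplied by (\ref{C1convergence}). I would start from the Green representation formula
\[
\tilde\epsilon_n^{2-N}\tilde v_n(x)=\int_{\O_n}G_n(x,y)\,\tilde\mu_n^{N/2}\,[\tilde v_n(y)-1]_+^p\,dy,
\]
subtract $\sum_{i=1}^kM_{p,0}\,G_-(x,z_{i,n})$, and write $G_n-G_-=H_n-H_-$. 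Using the spike localization (\ref{positivityregion}) to restrict the integration to the union of the balls $B_{2R_0\tilde\epsilon_n}(z_{i,n})$, I would decompose $R_n=\alpha_n+\beta_n+\gamma_n$ exactly as in (\ref{tripartition}), where $\alpha_n$ collects the terms $\int_{B_{2R_0\tilde\epsilon_n}(z_{i,n})}G_n(x,y)\tilde\mu_n^{N/2}[\tilde v_n-1]_+^p\,dy-M_{p,0}G_n(x,z_{i,n})$ for $i\in\{1,\dots,k\}$; $\beta_n$ collects the contribution of the spikes whose rescaled positions leave $B_R$; and $\gamma_n=\sum_{i=1}^kM_{p,0}(H_n(x,z_{i,n})-H_-(x,z_{i,n}))$.

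Each piece is treated exactly as the corresponding one in the interior case. For $\alpha_n$, I rescale $y=\tilde\epsilon_n z+z_{i,n}$ and use the $C^2_{loc}$ convergence $w_{i,n}\to w_0$ from Definition \ref{spikessequencesdef}(iv), together with the definition (\ref{eq:M_p,0-def}) of $M_{p,0}$, to split each summand into the analogues of $(A_n)$ and $(B_n)$: the former is controlled by the Lipschitz continuity of $G_n(x,\cdot)$ on $U_R\setminus\widetilde\Sigma_{r/2}$, which is uniform in $n$ thanks to the $C^1$ bound provided by Lemma \ref{Robinbounded}, combined with $\tilde\epsilon_n|z|\to 0$; the latter by the convergence $\|[w_{i,n}-1]_+^p-[w_0-1]_+^p\|_{L^\infty(B_{2R_0})}\to 0$. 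For $\gamma_n$, I apply (\ref{C1convergence}) directly, since $z_{i,n}\to z_i\in U_R$ and $x\in U_R\setminus\widetilde\Sigma_r$. For $\beta_n$, both the singular part $C_N/|x-y|^{N-2}$ tends to $0$ uniformly (because $|y|\to\infty$ while $x$ stays in $U_R$) and $H_n$ stays bounded by Lemma \ref{Robinbounded}, so the total contribution is $o_n(1)$.

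The $C^1$ statement is obtained by differentiating under the integral sign — legitimate because $\mathrm{dist}(B_{2R_0\tilde\epsilon_n}(z_{i,n}),U_R\setminus\widetilde\Sigma_r)\ge r/2$ for $n$ large — and repeating the above three estimates with $\nabla_x G_n$ in place of $G_n$. The derivative bounds on $H_n$ provided by Lemma \ref{Robinbounded} and the full $C^1$ convergence of $H_n-H_-$ in (\ref{C1convergence}) are precisely what make the $\gamma_n$ term close in the $C^1$ norm as well. I expect the main technical hurdle to be the boundary behaviour of $H_n$ — control of the regular part of the Green's function when one of its arguments is allowed to approach $\p\O_n$ — but this has already been isolated and settled by Lemmas \ref{Robinbounded} and \ref{approximateRobinlemma}; once these are in hand, the remainder of the argument is essentially a bookkeeping exercise parallel to Lemma \ref{lemmaasymptotic}.
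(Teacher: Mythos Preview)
Your proposal is correct and follows essentially the same approach as the paper's own proof: the same three-piece decomposition $R_n=\alpha_n+\beta_n+\gamma_n$ (the paper calls $\gamma_n$ the term $\tilde R_n$), the same use of Lemma~\ref{Robinbounded} for the Lipschitz control in $\alpha_n$, and the same appeal to (\ref{C1convergence}) for $\gamma_n$. One small imprecision: in your treatment of $\beta_n$, ``$H_n$ stays bounded'' together with ``singular part $\to 0$'' does not by itself give $G_n\to 0$; you need either the maximum-principle bound $0\le G_n(x,y)\le C_N|x-y|^{2-N}$, or Lemma~\ref{approximateRobinlemma} combined with $H_-(x,y)\to 0$ as $|y|\to\infty$, to conclude that the far-spike contribution vanishes.
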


\begin{proof}
By using the Green representation formula for $\tilde v_n$ in $\O_n$ and the definition of $G_n$, we have that
\begin{align}\nonumber
   R_n(x)&:=\tilde\epsilon_n^{2-N}\tilde v_n(x)-\sum_{i=1}^kM_{p,0} G_-(x,z_{i,n}) \\ \nonumber
    &=\tilde\epsilon_n^{2-N}\tilde v_n(x)-\sum_{i=1}^k M_{p,0}G_n(x,z_{i,n})+\sum_{i=1}^{k} M_{p,0}(H_n(x,z_{i,n})-H_-(x,z_{i,n})) \\
    &=\underbrace{\int_{\O_n}G_n(x,y)\tilde\mu_n^{\frac{N}{2}}[\tilde v_n(y)-1]_+^p\,dy-\sum_{i=1}^k M_{p,0}G_n(x,z_{i,n})}_{:=g_n(x)}+\tilde R_n(x).\nonumber
\end{align}
where, by \eqref{C1convergence},
$$
\norm{\tilde R_n}_{C^1(U_R)}:= \norm{\sum_{i=1}^{k} M_{p,0}(H_n(x,z_{i,n})-H_-(x,z_{i,n}))}=o_n(1),
$$ as $n\to+\infty$.
Now, we want to estimate $g_n(x)$ and we can proceed similarly to the case of interior spikes. However, for the sake of completeness, we sketch the main differences.

\noindent By using the fact that $ v_n$ is a spike sequence, one has
\[
[{v}_n-1]_+^p=0 \,\,\,\,\,\,\,\text{in}\,\,\O\backslash\bigcup_{i=1}^{l+\bar m}B_{2\epsilon_nR_0}(x_{i,n}),
\]
where $l$ is the total number of spike sequences that converge to $\tilde z$ and $\bar m\geq0$ counts the number of spike profiles relative to $v_n$ which may converge to other spike points in $\bar \O$. From the latter property, by definition of $\tilde v_n$, it is straightforward to deduce that
\begin{equation}
\label{vanishingoutsidespikesboundary}
[{\tilde v}_n-1]_+^p=0 \,\,\,\,\,\,\,\text{in}\,\,\O\backslash\bigcup_{i=1}^{l+\bar m}B_{2\tilde\epsilon_nR_0}(z_{i,n})
\end{equation}
and then,
\begin{align*}
\int_{\O_n}&G_n(x,y)\tilde\mu_n^{\frac{N}{2}}[\tilde v_n(y)-1]_+^p\,dy= \\
&=\sum_{i=1}^k\int_{B_{2R_0\tilde\epsilon_n(z_{i,n})}}G_n(x,y)\tilde\mu_n^{\frac{N}{2}}[\tilde v_n(y)-1]_+^p\,dy + \sum_{j=k+1}^{l+\bar m}\int_{B_{2R_0\tilde\epsilon_n(z_{j,n})}}G_n(x,y)\tilde\mu_n^{\frac{N}{2}}[\tilde v_n(y)-1]_+^p\,dy.
\end{align*}
We split the calculation of $g_n(x)$ as follows:
\begin{equation}\label{bipartition}
    \int_{\O_n}G_n(x,y)\tilde\mu_n^{\frac{N}{2}}[\tilde v_n(y)-1]_+^p\,dy-\sum_{i=1}^k M_{p,0}G_n(x,z_{i,n})= \alpha_n(x)+\beta_n(x),
\end{equation}
where
\begin{equation}
    \label{Inew}
\alpha_n(x):=\sum_{i=1}^k\bigg(\int_{B_{2R_0\tilde\epsilon_n}(z_{i,n})} G_n(x,y)\tilde\mu_n^{\frac{N}{2}}[\tilde v_n(y)-1]_+^p\,dy \,\,-M_{p,0} G_n(x,z_{i,n})\bigg)
\end{equation}
and
\begin{equation}
    \label{IInew}
\beta_n(x):=\sum_{i=k+1}^{l+\bar m}\int_{B_{2R_0\tilde\epsilon_n}(z_{i,n})} G_n(x,y)\tilde\mu_n^{\frac{N}{2}}[\tilde v_n(y)-1]_+^p\,dy.
\end{equation}
To begin, we estimate \eqref{Inew}. We remind that $\{z_{i,n}\}_{i=1}^k$ are converging to limit points $z_1,\dots,z_k$ inside $U_R$ respectively. For any $i\in\{1,\dots,k\}$, we see that
\begin{align*}
    \int_{B_{2R_0\tilde\epsilon_n}(z_{i,n})} &G_n(x,y)\tilde\mu_n^{\frac{N}{2}}[\tilde v_n(y)-1]_+^p\,dy \,-M_{p,0} G_n(x,z_{i,n})\\
    &=\int_{B_{2R_0}(0)} G_n(x,\tilde\epsilon_n z+z_{i,n})[\tilde v_n(\tilde\epsilon_n z+z_{i,n})-1]_+^p\,dz \,-M_{p,0} G_n(x,z_{i,n})\\
    &=:\int_{B_{2R_0}(0)} G_n(x,\tilde\epsilon_n z+z_{i,n})[w_{i,n}(z)-1]_+^p\,dz \,-M_{p,0} G_n(x,z_{i,n}),
\end{align*}
where $w_{i,n}\to w_0$ in $C^{2}_{loc}$ as in definition \ref{spikessequencesdef} (iv). Here, we have used the fact that
\begin{equation*}
\tilde v_n(\tilde\epsilon_n z+z_{i,n})=v_n(x_{1,n}+d_n(\tilde\epsilon_n z+z_{i,n}))=v_n(x_{i,n}+\epsilon_n z)=w_{i,n}(z).
\end{equation*}
Thus, by using the definition of $M_{p,0}$ \eqref{eq:M_p,0-def}, we have that
\begin{align*}
    \bigg|\int_{B_{2R_0}(0)} &G_n(x,\tilde\epsilon_n z+z_{i,n})[w_{i,n}(z)-1]_+^p\,dz \,-M_{p,0} G_n(x,z_{i,n})\bigg|\\
    &\leq \underbrace{\int_{B_{2R_0}(0)} \big|G_n(x,\hat\epsilon_n z+z_{i,n})-G_n(x,z_{i,n})\big|[w_0(z)-1]_+^p\,dz}_{(\tilde A)} +\\
    &\hspace{2cm}+\underbrace{\int_{B_{2R_0}(0)} G_n(x,\tilde\epsilon_n z+z_{i,n})\big|[w_{i,n}-1]_+^p-[w_0(z)-1]_+^p\big|\,dz}_{(\tilde B)}.
\end{align*}
To estimate $(\tilde A)$, we remind that $|z|\leq 2R_0$ and $x\in U_R\backslash\widetilde\Sigma_r$. Then, for $n$ large enough, $|x-z_{i,n}|\geq\tfrac{r}{2}$, for every $i\in\{1,\dots,k\}$.
Therefore, by using Lemma \ref{Robinbounded}, we deduce that
\begin{align*}
    \Big|G_n(x,\tilde\epsilon_n z&+z_{i,n})-G_n(x,z_{i,n})\Big|=\\[0.5ex]
    &=\Big|\tfrac{C_N}{|x-z_{i,n}-\tilde\epsilon_n z|^{N-2}}-\tfrac{C_N}{|x-z_{i,n}|^{N-2}}+ H_n(x,\tilde\epsilon_n z+z_{i,n})-H_n(x,z_{i,n})\Big|\\[0.5ex]
    &\leq\Big|\tfrac{C_N}{|x-z_{i,n}-\tilde\epsilon_n z|^{N-2}}-\tfrac{C_N}{|x-z_{i,n}|^{N-2}}\Big|+C(R_0)\tilde\epsilon_n \\[0.5ex]
    &\leq C(r,R,R_0,N)(\tilde\epsilon_n).
\end{align*}
In particular, $(\tilde A)=o_n(1)$, as $n\to+\infty$.
Concerning $(\tilde B)$, by using again Lemma \ref{Robinbounded}, we deduce that
\begin{align*}
(\tilde B)&=\int_{B_{2R_0}(0)} G_n(x,\tilde\epsilon_n z+z_{i,n})\big|[w_{i,n}-1]_+^p-[w_0(z)-1]_+^p\big|\,dz \\
&\leq\int_{B_{2R_0}}(\tfrac{C}{r^{N-2}}+C)\big|[w_{i,n}-1]_+^p-[w_0(z)-1]_+^p\big|\,dz \\
&\leq C(r,R_0,N)\|[w_{i,n}-1]_+^p-[w_0(z)-1]_+^p\|_{L^{\infty}(B_{2R_0})}\\
&=o_n(1), \,\,\,\,\,\text{as} \,\,n\to+\infty.
\end{align*}
Hence, this shows that 
$$\sup_{U_R\backslash \tilde\Sigma_r} \abs{\alpha_n(x)}\to0, \,\,\text{as}\,\, n\to+\infty.$$
Now, we want to estimate \eqref{IInew}. Let $i\in\{k+1,\dots,l+\bar m\}$, then
\begin{align*}
    \int_{B_{2R_0\tilde\epsilon_n}(z_{i,n})} G_n(x,y)\tilde\mu_n^{\frac{N}{2}}[\tilde v_n(y)-1]_+^p\,dy&=\int_{B_{2R_0}(0)} G_n(x,\tilde\epsilon_n z+z_{i,n})[\tilde v_n(\tilde\epsilon_n z+z_{i,n})-1]_+^p\,dz\\
    &=\int_{B_{2R_0}(0)} G_n(x,\tilde\epsilon_n z+z_{i,n})[w_{i,n}(z)-1]_+^p\,dz,
\end{align*}
where $w_{i,n}\to w_0$ in $C^0_{loc}$ as in definition \ref{spikessequencesdef} (iv). However, in this case, by using the fact that $|z_{i,n}|\to+\infty$, $|x|\leq R$, $|z|\leq2R_0$ and \eqref{Hndefinition2}, we deduce that
\[
G_n(x,\tilde\epsilon_n z+z_{i,n})=\tfrac{C_N}{|x-\tilde\epsilon_n z-z_{i,n}|^{N-2}}+H_n(x,\tilde\epsilon_n z+z_{i,n})\to 0,
\]
as $n\to+\infty$, uniformly in $x\in U_R$. Hence, 
\begin{align*}
    \int_{B_{2R_0\tilde\epsilon_n}(z_{i,n})} G_n(x,y)\tilde\mu_n^{\frac{N}{2}}[\tilde v_n(y)-1]_+^p\,dy=o_n(1),
\end{align*}
for every $i\in\{k+1,\dots,l+\bar m\}$ and so 
$$
\sup_{U_R} \abs{\beta_n (x)}\to0,\,\,\text{as}\,\,n\to+\infty.
$$
At this point, from \eqref{bipartition}, we are able to deduce that
\begin{equation*}
    \sup_{U_R\backslash \tilde\Sigma_r} |R_n(x)|:=\sup_{U_R\backslash \tilde\Sigma_r} \left|\tilde\epsilon_n^{N-2}\tilde v_n(x)-\sum_{i=1}^k M_{p,0}G_-(x,z_{i,n})\right|\to 0,\,\,\text{as}\,\, n\to+\infty
\end{equation*}
Concerning the derivative of $R_n$ , we can argue as in the interior spikes case with the same minor modifications we did above. This concludes the proof of the lemma.
\end{proof}

\subsection{Pohozaev identity for boundary spike points}\label{subsec:boundarypohozaev}

In this final step, we prove, by the means of a Pohozaev-type identity, that the spike points inside $U_R$ are critical points of a suitable Hamiltonian. \\
Let us set $\tilde u_n:=\tilde\epsilon_n^{2-N}\tilde v_n$ in $U_{R}$, then
\begin{equation}
 \label{equationtildeun}
 -\D \tilde u_n=\tilde\mu_n^{\frac{N}{2}}[\tilde v_n-1]_+^p\,\,\,\,\text{in $U_{R}$},
\end{equation}
 and, by Lemma \ref{approximationlemmaboundary}, we deduce that
\begin{equation*}
 \tilde u_n(x)\to \tilde G(x):=M_{p,0}\sum_{i=1}^k  G_-(x,z_i),\,\,\,\,\text{in}\,\, C^0_{loc}(\bar U_{R}\backslash\widetilde \Sigma_r)
\end{equation*}
and
\begin{equation*}
 \nabla\tilde u_n(x)\to  \nabla \tilde G(x)=\sum_{i=1}^kM_{p,0} \nabla G_-(x,z_i),\,\,\,\,\text{in}\,\, C^0_{loc}(\bar U_{R}\backslash\widetilde \Sigma_r),
\end{equation*}
as $n\to+\infty$.
Now we test the above equation \eqref{equationtildeun} against $\nabla\tilde u_n$ and obtain the following vectorial Pohozaev-type identity, 
\begin{equation}
 \label{pohozaev2}
 \int_{\p \tilde\O}\left(-(\p_{\nu}\tilde u_n)\nabla\tilde u_n+\tfrac{1}{2}|\nabla\tilde u_n|^2\nu\right)=\int_{\p \tilde\O}\tfrac{\tilde\mu_n^{N-1}}{p+1}[\tilde v_n-1]_+^{p+1}\nu,
\end{equation}
for every open subset $\tilde\O\Subset U_{R}$. \\
At this point, we fix $j\in\{1, \dots, k\}$ and consider $z_j\in\widetilde\Sigma$ and $\tilde\O=B_r(z_j)$. Hence, we derive that the left-hand side of \eqref{pohozaev2} satisfies
\[
 \int_{\p B_r(z_j)}\left(-(\p_{\nu}\tilde u_n)\nabla\tilde u_n+\tfrac{1}{2}|\nabla\tilde u_n|^2\nu\right)\,d\sigma\to \int_{\p B_r(z_j)}\left(-(\p_{\nu}\tilde G)\nabla\tilde G+\tfrac{1}{2}|\nabla\tilde G|^2\nu\right)\,d\sigma
\]
as $n\to+\infty$, while, reminding \eqref{vanishingoutsidespikesboundary}, the right-hand side of \eqref{pohozaev2} vanishes, 
\[
 \int_{\p B_r(z_j)}\tfrac{\tilde\mu_n^{N-1}}{p+1}[\tilde v_n-1]_+^{p+1}\nu=0,
\]
for $n$ large enough. 
Hence, we have the following identity:
\begin{equation}
 \label{limitpohozaevboundary}
 \int_{\p B_r(z_j)}\left(-(\p_{\nu}\tilde G)\nabla\tilde G+\tfrac{1}{2}|\nabla\tilde G|^2\nu\right)\,d\sigma=0.
\end{equation}
Now, by definition of $\tilde G$, by denoting $r(x)=|x-z_j|$, then we can write
\begin{align*}
 \tilde G(x)=M_{p,0} C_N\left(\tfrac{1}{r^{N-2}}+F_j(x)\right),
\end{align*}
where 
\begin{align*}
F_j(x):=- \tfrac{1}{|x-\tilde z_j|^{N-2}}+\sum_{i=1,i\neq j}^{k} \left(\tfrac{1}{|x-z_i|^{N-2}} -\tfrac{1}{|x-\tilde z_i|^{N-2}}\right).
\end{align*}
Then, by a simple computation, we have that
\begin{align*}
 -(\p_{\nu}\tilde G)\nabla\tilde G&+\tfrac{1}{2}|\nabla\tilde G|^2\nu= \\
 &=M_{p,0}^2C_N^2\left[-\tfrac{1}{2}\tfrac{(2-N)^2}{r^{2N-2}}\nabla r+\tfrac{1}{2}|\nabla F_j|^2\nabla r+ \tfrac{N-2}{r^{N-1}} \nabla F_j-(\nabla F_j\cdot\nabla r)\nabla F_j\right],
\end{align*}
which, by \eqref{limitpohozaevboundary}, implies that
\begin{equation}
 \label{limitpohozaevboundary2}
\int_{\p B_r(z_j)} \left[-\tfrac{1}{2}\tfrac{(2-N)^2}{r^{2N-2}}\nabla r+\tfrac{1}{2}|\nabla F_j|^2\nabla r+ \tfrac{N-2}{r^{N-1}} \nabla F_j-(\nabla F_j\cdot\nabla r)\nabla F_j\right]\,d\sigma=0.
\end{equation}
By inspection, we conclude that the first integral is $0$ by symmetry, while all the components of the vectors
\[
 \int_{\p B_r(z_j)} \tfrac{1}{2}|\nabla F_j|^2\nabla r\,d\sigma\,\,\,\,\,\text{and}\,\,\, \int_{\p B_r(z_j)}(\nabla F_j\cdot\nabla r)\nabla F_j\,d\sigma
\]
are $O(r^{N-1})$, as $r\to0$. Finally,
\[
 \int_{\p B_r(z_j)} \tfrac{1}{r^{N-1}} \nabla F_j\,d\sigma=\abs{\Sf^{N-1}}\fint_{\p B_r(z_j)} \nabla F_j\,d\sigma,
\]
so, by letting $r\to 0$ in \eqref{limitpohozaevboundary2}, we deduce that
\begin{equation*}
 \nabla F_j(z_j)=0.
\end{equation*}
Since we can repeat the same procedure for every $j\in\{1,\dots,k\}$, we deduce from the definition of $F_j$ that
\begin{equation*}
  -\frac{(z_j-\tilde z_j)}{|z_j-\tilde z_j|^N}+\sum_{i=1,i\neq j}^{k} \left(\frac{(z_j-z_i)}{|z_j-z_i|^N}-\frac{(z_j-\tilde z_i)}{|z_j-\tilde z_i|^N}\right)=0, \qquad \text{for every $j\in\{1,\dots,k\}$},
\end{equation*}
or, equivalently,
\begin{equation}
 \label{Finalequation2B}
  \frac{(\tilde z_j-z_j)}{|\tilde z_j-z_j|^N}+\sum_{i=1,i\neq j}^{k} \left(\frac{(\tilde z_i-z_j)}{|\tilde z_i-z_j|^N}-\frac{(z_i- z_j)}{|z_i- z_j|^N}\right)=0, \qquad \text{for every $j\in\{1,\dots,k\}$}.
\end{equation}
Let us consider the $N^{\mathrm{th}}$ component of \eqref{Finalequation2B} and let $j\in\{1,\dots,k\}$ be such that $(\tilde z_j)_N=\underset{i=1,\dots,k}\min (\tilde z_i)_N$ (we remark that $(\tilde z_i)_N>0$, for every $i\in\{1,\dots,k\}$). Now, by looking at \eqref{Finalequation2B} for this particular index $j$, we have that
\begin{align*}
    \frac{(\tilde z_{j}-z_{j})_N}{|\tilde z_{j}-z_{j}|^N}&+\sum_{i=1,i\neq {j}}^{k} \bigg(\frac{(\tilde z_i-z_{j})}{|\tilde z_i-z_{j}|^N}-\frac{(z_i- z_{ j})}{|z_i- z_{ j}|^N}\bigg)_N \\
    &=\underbrace{\frac{2(\tilde z_{j})_N}{|\tilde z_{j}-z_{j}|^N}}_{>0}+\sum_{i=1,i\neq {j}}^{k} \bigg(\underbrace{\frac{(\tilde z_i)_N+(\tilde z_{j})_N}{|\tilde z_i-z_{ j}|^N}}_{>0}+\underbrace{\frac{(z_{j})_N-(z_i)_N}{|z_i- z_{j}|^N}}_{\geq0}\bigg)>0,
\end{align*}
which is in contradiction with \eqref{Finalequation2B}. This proves the theorem.

\section*{Acknowledgements}
The authors are supported by the PRIN Project 2022AKNSE4 {\em Variational and Analytical aspects of Geometric PDE} and are members of GNAMPA, as part of INdAM. The first author is also supported by the MIUR Excellence Department Project MatMod@TOV awarded to the Department of Mathematics, University of Rome Tor Vergata, codice CUP E83C23000330006 and by University of Rome Tor Vergata Project \acc E.P.G.P.'', codice E83C25000550005.\\
We would like to express our sincere gratitude to Prof. Daniele Bartolucci for the inspiring discussions concerning the topics of the paper.


\printbibliography[heading=bibintoc]

\end{document}